\newtheorem{theorem}{Theorem}
\newtheorem{remark}[theorem]{Remark}
\newtheorem{proposition}[theorem]{Proposition}
\newtheorem{lemma}[theorem]{Lemma}
\def \R{\mathbb R}
\newcommand \vol[2][2]{\left|#2\right|_{#1}}
\def \RpK{{R_pK}}
\title{Planar radial mean bodies are convex}
\author{J. Haddad}
\begin{document}
\maketitle

\begin{abstract}
	The radial mean bodies of parameter $p>-1$ of a convex body $K \subseteq \R^n$ are radial sets introduced in \cite{GZ98} by Gardner and Zhang.
	They are known to be convex for $p\geq 0$.
	We prove that if $K \subseteq \R^2$ is a convex body, then its radial mean body of parameter $p$ is convex for every $p \in (-1,0)$.
\end{abstract}

\section{Introduction}
\label{sec_introduction}

Let $K \subseteq \R^n$ be a convex body (a compact convex set with non-empty interior).
The radial mean body of $K$ of parameter $p > 0$ is the unit ball of $\R^n$ of the norm defined by
\begin{equation}
	\label{eq_norm_Lp}
	\|v\|_{\RpK} = \left(\frac 1{\vol[n]{K}} \int_K \varrho_{K}(x,v)^p dx \right)^{-1/p}, v \in \R^n
\end{equation}
where $\vol[n]{K}$ is the $n$-dimensional volume of $K$ and $\varrho_{K}(x,v)$ is the largest $\lambda > 0$ such that $x+\lambda v \in K$, this is, the radial function of $K$ with respect to $x \in K$ in the direction of $v$.

The family of bodies $\RpK$ was introduced and studied by Gardner and Zhang in \cite{GZ98} where several important properties and inequalities were established. In particular, the results in \cite{GZ98} include the Zhang inequality on the polar projection body (see \cite{Zhang91}) and the Rogers-Shephard inequality (see \cite{RS57}).
Its close relation with the X ray transform, the covariogram function, the convolution bodies and Berwald-type inequalities on logarithmically concave functions makes $\RpK$ a very interesting geometric object.
These subjects were studied further in \cite{alonso2024brunn}, \cite{Tsol}, \cite{alonso2020extension} and \cite{HL22} to cite just a few examples.

The definition of $\RpK$ also makes sense in the range $p \in (-1,0)$, while for $p=0$ it is defined via continuity (see \cite{GZ98} or Proposition \ref{prop_GZXray} below).
Even without knowing if the right-hand side of \eqref{eq_norm_Lp} is a norm we still denote it by $\|x\|_{\RpK}$, and the set $\RpK$ can be defined as its level set
\[
	\RpK = \left\{ x \in \R^n : \|x\|_{\RpK} \leq 1 \right\}, \qquad p > -1
\]
which is a star set with respect to the origin.

The non trivial fact that $\RpK$ is convex was proven in \cite[Section 4]{GZ98} for $p \geq 0$, by generalizing a previous result by K. Ball \cite{Ball88}, on logarithmically concave functions.
However, the proof breaks down if $p \in (-1,0)$, and the convexity of $\RpK$ remains an open problem.
It seems that there is no hope of adapting the proof given in \cite{GZ98} to this range, and after more than two decades, little to no progress was made in this problem.
In this paper we prove the convexity of $\RpK$ in the plane.
\begin{theorem}
	\label{thm_main}
	Let $K \subseteq \R^2$ be a convex body, then $\RpK$ is convex.
\end{theorem}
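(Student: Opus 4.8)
The plan is to reduce the convexity of $\RpK$ to a pointwise differential inequality for a single auxiliary function, and then to prove that inequality by hand. First, using the slicing/X‑ray formula (Proposition~\ref{prop_GZXray}): on each chord of $K$ of length $\ell$ in direction $u$ the function $\varrho_K(\,\cdot\,,u)$ runs linearly over $[0,\ell]$, so integrating its $p$‑th power along the chord gives $\ell^{\,p+1}/(p+1)$, and hence
\[
\|u\|_{\RpK}^{-p}\;=\;\frac1{\vol{K}}\int_K\varrho_K(x,u)^p\,dx\;=\;\frac1{(p+1)\vol{K}}\int_{u^\perp}\ell_K(y,u)^{\,p+1}\,dy .
\]
Writing $u=u_\theta=(\cos\theta,\sin\theta)$ and $M(\theta):=\int_{u_\theta^\perp}\ell_K(y,u_\theta)^{\,p+1}\,dy$, the Minkowski gauge of $\RpK$ is, on the unit circle, a positive multiple of $g(\theta):=M(\theta)^{-1/p}$. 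A star body about the origin in $\R^2$ is convex exactly when its gauge, extended $1$‑homogeneously, is convex, and this holds iff $g''+g\ge0$ in the distributional sense on the circle. With $\alpha=-1/p>1$ and $q=-p\in(0,1)$ a direct computation gives $g''+g=M^{\alpha-2}\bigl(M^2+\alpha(\alpha-1)(M')^2+\alpha MM''\bigr)$, so Theorem~\ref{thm_main} is equivalent to
\[
q^2M(\theta)^2+(1-q)M'(\theta)^2+q\,M(\theta)M''(\theta)\;\ge\;0\qquad\text{for all }\theta .
\]
By approximation it suffices to prove this when $K$ is $C^\infty$ and strictly convex: the map $K\mapsto M$ is continuous under Hausdorff convergence (dominated convergence, using $p+1\in(0,1)$), and a pointwise limit of convex gauges is convex.

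Second, I would develop first and second variation formulas for the chord lengths. Parametrize $\partial K$ by the angle $\chi$ of the outer normal, write $P(\chi)\in\partial K$ for the corresponding point and $r(\chi)=h_K(\chi)+h_K''(\chi)>0$ for the radius of curvature, and let $\Lambda(\chi,\theta)$ be the length of the chord of $K$ through $P(\chi)$ in direction $u_\theta$. Splitting the chords of a fixed direction according to their forward and backward endpoints gives the symmetric representation
\[
2M(\theta)=\int_{S^1}\Lambda(\chi,\theta)^{\,p+1}\,r(\chi)\,\bigl|\cos(\chi-\theta)\bigr|\,d\chi ,
\]
and differentiating the defining relation ``the two endpoints lie on $\partial K$ and differ by $\Lambda\,u_\theta$'' yields clean identities, e.g. $\partial_\theta\log\Lambda(\chi,\theta)=-\tan(\psi(\chi,\theta)-\theta)$, where $P(\psi)$ is the opposite endpoint, together with the analogous formulas for $\partial_\theta\psi$ and for the second derivatives. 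Because $\Lambda^{p+1}|\cos(\chi-\theta)|$ vanishes at the two degenerate chords (where $\chi=\theta\pm\pi/2$) and $p+1>0$, differentiating under the integral sign produces no boundary terms, so $M'$ and $M''$ are honest integrals over the chord family. Substituting these into the displayed inequality turns the theorem into a single integral inequality attached to $\partial K$.

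That integral inequality is the heart of the matter and the step I expect to be hardest. Dividing by $M^2$ it reads $q^2+(1-q)(M'/M)^2+q\,M''/M\ge0$, i.e. a lower bound $M''\ge-qM-\tfrac{1-q}{q}(M')^2/M$ that must tolerate arbitrarily negative values of $M''$ as soon as $M'$ is large. It is genuinely two–dimensional: naive pointwise‑in‑$\chi$ sufficient conditions — such as $M''\ge-qM$, or log‑convexity in $\theta$ of the integrand — are \emph{false}; they fail already for discs and for long thin rectangles, where $M''$ can be very negative. What must be exploited is that the coefficient of $(M')^2$ is $1$ while that of $MM''$ is only $q<1$, so that the negative part of $M''$ is always offset by the $(M')^2$ coming from the \emph{same} chord integral. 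Concretely I would try to prove the inequality by applying Cauchy--Schwarz and Jensen to the chord integral with its natural weight $\Lambda^{p+1}r(\chi)|\cos(\chi-\theta)|\,d\chi$, tracking the identity $\Phi_{\theta\theta}/\Phi=\partial_\theta(\Phi_\theta/\Phi)+(\Phi_\theta/\Phi)^2$ for the integrand $\Phi$, and using crucially the concavity of $s\mapsto s^{\,p+1}$ in the chord length ($p+1\in(0,1)$) together with the convexity of the two boundary arcs that bound the chords of a fixed direction. Making the cancellation between the $MM''$ and $(M')^2$ terms tight — i.e. producing exactly the constants $1$ and $q$ — is the crux, and is presumably where the dimension being $2$ and the restriction $p>-1$ really enter.
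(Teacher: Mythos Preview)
Your reduction is correct: the X-ray formula, the smooth/strictly convex approximation via Lemma~\ref{lem_approximation}, and the planar convexity criterion $g''+g\ge0$ combine to show that Theorem~\ref{thm_main} is equivalent to the scalar inequality
\[
q^2M(\theta)^2+(1-q)M'(\theta)^2+q\,M(\theta)M''(\theta)\ge0,\qquad q=-p\in(0,1),
\]
for $M(\theta)=\int_{u_\theta^\perp}X_{u_\theta}K(y)^{p+1}\,dy$. Your variation set-up is also sound; with a smooth strictly convex $K$ the boundary behaviour of $\Lambda^{p+1}|\cos(\chi-\theta)|$ is mild enough that two differentiations under the integral are legitimate.

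But the proposal stops exactly where the difficulty begins. You do not prove the displayed inequality; you only describe a hope (``apply Cauchy--Schwarz and Jensen \ldots\ track $\Phi_{\theta\theta}/\Phi$ \ldots'') and you yourself flag that the naive pointwise or log-convexity bounds fail. Producing the precise constants $1$ and $q$ in front of $(M')^2$ and $MM''$ is the whole content of the theorem, and nothing in the sketch indicates a mechanism that forces those constants. In particular, Cauchy--Schwarz on the chord integral will give a comparison between $(M')^2$ and $M$ times an integral of $(\partial_\theta\Phi)^2/\Phi$, not $M''$; closing the gap between that and $MM''$ again requires controlling a sign-indefinite curvature term, which is the original problem in disguise. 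As written this is a plan, not a proof.

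For contrast, the paper avoids the analytic inequality entirely. It works on generic polygons, where $M(\theta)$ is computed in closed form on each cone $C_Z$ as $\sum_i\alpha_i\langle n_i,x\rangle^{-p}$ (Proposition~\ref{prop_structure_f}). The crucial step, Proposition~\ref{prop_signs}, is purely combinatorial: exactly one coefficient $\alpha_{i_0}$ is positive and the rest are $\le0$, a fact extracted from the convex ordering of the sides. Convexity of $f_Z^{-1/p}$ then follows from an elementary one-variable argument (Proposition~\ref{prop_convexincone}), after which the cones are patched via Tietze's local-to-global lemma. The paper never confronts the second-variation inequality you isolate; it replaces it by an algebraic sign pattern that is specific to polygons in the plane. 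If you want to pursue your route, you would need an honest proof of the inequality above---and the paper's method suggests that any such proof will have to exploit more than generic convexity/concavity of the integrand.
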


The proof will be divided in Sections \ref{sec_polygons} and \ref{sec_cones}. In Section \ref{sec_polygons} we find an interesting formula for the norm \eqref{eq_norm_Lp}, for a generic family of polygons $K$ (Proposition \ref{prop_structure_f}), and prove its convexity in a cone of $\R^2$. Proposition \ref{prop_structure_f} is the core of our result. Then in Section \ref{sec_cones} we extend the convexity to a finite set of non-overlapping cones covering the plane, and analyze the behaviour of the norm at the intersection of these cones.

\subsection*{Acknowledgments}
The author was supported by Grant RYC2021 - 031572 - I, funded by the Ministry of Science and Innovation / State Research Agency / 10.13039 / 501100011033 and by the E.U. Next Generation EU/Recovery, Transformation and Resilience Plan, and by Grant PID2022-136320NB-I00 funded by the Ministry of Science and Innovation.

\section{Preliminaries}
\label{sec_preliminaries}

In this paper a {\it closed cone outside the origin} is a set of the form 
\[\{x \in \R^2 \setminus \{0\}: \langle x, v_1 \rangle \geq 0 \text{ and } \langle x, v_2 \rangle \geq 0 \}\]
where $v_1, v_2$ are non-zero vectors.
A closed cone outside the origin is a closed subset of $x \in \R^2 \setminus \{0\}$.
An {\it open cone} is defined similarly with strict inequalities, and automatically does not contain the origin so we can avoid the term {\it outside the origin}.

A non-negative function $f:\R^n \to \R$ is {\it homogeneous of degree $\alpha \in \R$} if $f(\lambda x) = |\lambda|^\alpha f(x)$ for every $\lambda \in \R, x \in \R^n$.

For $x \in \R^n \setminus \{0\}$ the line generated by $x$ is denoted by $\langle x \rangle$, and its orthogonal complement, by $\langle x \rangle^\perp$.
For $K \subseteq \R^n$ a convex body and $y \in \R^n$, the {\it X ray of $K$} in the direction of $v$ going through $y$ is the segment $K \cap (y+\langle v \rangle)$. Its length is denoted by $X_v K(y)$.
We will use the following well known formula for $\|\cdot\|_{\RpK}$ in terms of the X rays of $K$.
\begin{proposition}[Theorem 2.2 in {\cite{GZ98}}]
	\label{prop_GZXray}
	Let $K$ be a convex body and $p > -1$. For $x\in \R^n$ a unitary vector,
	\begin{equation}
		\label{eq_norm_Xrays}
		\|x\|_{\RpK} = \left((p+1)\vol[n]{K}  \int_{\langle x \rangle^\perp} X_x K(y)^{1+p} dy \right)^{-1/p}.
	\end{equation}
\end{proposition}

Since we will prove Theorem \ref{thm_main} for polygons first, and then use approximation, we need to show that $\RpK$ is continuous with respect to $K$.
\begin{lemma}
	\label{lem_approximation}
	Let $K_m \subseteq \R^n$ be a sequence of convex bodies converging to the convex body $K \subseteq \R^n$ in the Hausdorff metric of compact sets.
	Then for every $p > -1$, $\|\cdot\|_{R_pK_m}$ converges pointwise to $\|\cdot\|_{\RpK}$.
\end{lemma}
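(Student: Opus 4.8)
The plan is to use the X-ray representation \eqref{eq_norm_Xrays} and reduce the statement to the pointwise convergence of the integrals $\int_{\langle x\rangle^\perp} X_x K_m(y)^{1+p}\,dy$ together with convergence of the volumes $\vol[n]{K_m}$. Fix a unit vector $x$. Since $K_m \to K$ in the Hausdorff metric, we certainly have $\vol[n]{K_m} \to \vol[n]{K} > 0$ (continuity of volume under Hausdorff convergence for convex bodies is classical), so it suffices to prove
\[
	\int_{\langle x\rangle^\perp} X_x K_m(y)^{1+p}\,dy \longrightarrow \int_{\langle x\rangle^\perp} X_x K(y)^{1+p}\,dy,
\]
and then the continuous function $t \mapsto ((p+1)\vol[n]{K} t)^{-1/p}$, applied to these quantities, gives the claim. (One must note $\int_{\langle x\rangle^\perp} X_x K(y)^{1+p}\,dy$ is finite and positive: positivity is clear since $K$ has non-empty interior, and finiteness holds because $1+p>0$, the projection of $K$ onto $\langle x\rangle^\perp$ is bounded, and $X_xK$ is bounded on it.)

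For the convergence of the integrals, the first step is to record the pointwise behaviour of the integrand. For $y$ in the interior of the projection $P_{\langle x\rangle^\perp}K$, the line $y+\langle x\rangle$ meets the interior of $K$, and Hausdorff convergence forces $X_x K_m(y) \to X_x K(y)$; indeed, the endpoints of the chord $K_m \cap (y+\langle x\rangle)$ converge to those of $K \cap (y+\langle x\rangle)$, which is a standard consequence of $K_m \to K$ once the limiting chord has positive length. Hence $X_x K_m(y)^{1+p} \to X_x K(y)^{1+p}$ for a.e.\ $y \in \langle x\rangle^\perp$ (the boundary of the projection is a null set). The second step is a domination/uniform-support argument: pick $R$ with $K \subseteq R B^n$; then for all large $m$, $K_m \subseteq 2R B^n$, so every $X_x K_m$ is supported in the fixed ball $2R B^n \cap \langle x\rangle^\perp$ and is bounded by $4R$ there. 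Therefore $X_x K_m(y)^{1+p} \le (4R)^{1+p} \mathbf 1_{2RB^n}(y)$, an integrable dominating function independent of $m$, and the dominated convergence theorem yields the desired convergence of the integrals.

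The main obstacle — and it is a genuine one when $p \in (-1,0)$ — is that the integrand $X_x K_m(y)^{1+p}$ blows up as $y$ approaches the boundary of the projection, since $1+p < 1$ makes $t\mapsto t^{1+p}$ have infinite slope at $0$; so pointwise convergence of chord lengths near that boundary does not by itself control the integrals. This is exactly why the uniform compact support plus the \emph{boundedness from above} of $X_x K_m$ is the crucial input: the singularity is only of the harmless type $t^{1+p}$ with $1+p>0$ (integrable), not a negative power, so a single dominating function of the form $C\,\mathbf 1_{\text{fixed ball}}$ suffices and no delicate uniform control near the boundary is needed. Once dominated convergence applies, assembling the pieces through the explicit continuous formula \eqref{eq_norm_Xrays} completes the proof; note that if $\|x\|_{\RpK} = \infty$ (which cannot happen here since the integral is finite and positive, but is worth a remark for degenerate directions) the same continuity argument still applies with values in $[0,\infty]$.
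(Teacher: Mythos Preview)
Your argument is correct and matches the paper's approach: pointwise a.e.\ convergence of $X_x K_m$ to $X_x K$, a uniform integrable bound coming from the Hausdorff convergence, and dominated convergence applied through formula \eqref{eq_norm_Xrays}. Two small notes: the integrand $X_x K_m(y)^{1+p}$ does \emph{not} blow up near the boundary of the projection (since $1+p>0$ it tends to $0$ there), so your ``main obstacle'' paragraph rests on a misconception and can be dropped; and to justify a.e.\ convergence on all of $\langle x\rangle^\perp$ you should also treat the easy case of $y$ outside the closed projection of $K$, where the line $y+\langle x\rangle$ lies at positive distance from $K$ and hence misses $K_m$ for large $m$, exactly as the paper does.
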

\begin{proof}
	Fix $x$ a unitary vector and $y \in \langle x \rangle^\perp$.
	If $y+\langle x \rangle$ does not intersect $K$, then by the compactness of $K$, the distance from $y+\langle x \rangle$ to $K$ is strictly positive, and $y+\langle x \rangle$ is disjoint from $K_m$ for sufficiently large $m$.
	If $y+\langle x \rangle$ intersects the interior of $K$, consider a point $p_0 \in y+\langle x \rangle$ in the interior of $K$.
	For $m$ large enough, $p_0$ belongs to $K_m$.
	The convergence in the Hausdorff metric of convex bodies then implies $\varrho_{K_m}(p_0, \pm x) \to \varrho_K(p_0, \pm x)$.
	This means that $X_x K_m (y) \to X_x K(y)$ provided that $y + \langle x \rangle$ is not a support line of $K$.

	Since the orthogonal projection of $K$ onto $\langle x \rangle^\perp$ has a relative boundary of zero $n-1$ dimensional measure, we get that $X_xK_m$ converges pointwise to $X_x K$ in almost every point of $\langle x \rangle^\perp$.

	The sets $K_m, K$ are uniformly bounded by the convergence and the compactness of $K$.
	Then applying the Dominated Convergence Theorem we deduce that $\|x\|_{R_pK_m} \to \|x\|_{\RpK}$.
	By the homogeneity of $\|\cdot\|_\RpK$, the same holds for all $x \in \R^n$ and the theorem follows.
\end{proof}

In several situations it will be easier to show the convexity of sets and functions, locally.
We use the following result by Tietze on ``local to global'' convexity.
\begin{lemma}[Satz 1 in \cite{tietze1928konvexheit}]
	\label{lem_locallyconvex}
	Let $K \subseteq \R^n$ be a closed connected set.
	Then $K$ is convex if and only if for every $x \in K$ there exists $\varepsilon > 0$ such that $B(x, \varepsilon) \cap K$ is convex.
\end{lemma}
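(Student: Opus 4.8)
The forward implication is immediate: if $K$ is convex, then for every $x\in K$ and every $\varepsilon>0$ the set $B(x,\varepsilon)\cap K$ is an intersection of two convex sets, hence convex. All the content lies in the converse, so I would assume that $K$ is closed, connected and locally convex, fix two points $a,b\in K$, and aim to show $[a,b]\subseteq K$; since $a,b$ are arbitrary, this gives convexity of $K$.

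First I would upgrade the hypothesis to global connectivity by paths. Each $x\in K$ has some $\varepsilon_x>0$ with $B(x,\varepsilon_x)\cap K$ convex and therefore path-connected, so $K$ is locally path-connected; being also connected, $K$ is path-connected. Covering the compact image of a path from $a$ to $b$ by finitely many such convex neighborhoods, I may moreover take this path to be polygonal, of some finite length $L_0$. The main device is then to minimize length: every path in $K$ from $a$ to $b$ of length at most $L_0$ lies in the bounded set $\overline{B(a,L_0)}$, and after reparametrizing proportionally to arc length on $[0,1]$ such paths are uniformly Lipschitz and uniformly bounded. By the Arzel\`a--Ascoli theorem a length-minimizing sequence has a uniformly convergent subsequence, whose limit $\gamma$ lies in $K$ (as $K$ is closed), still joins $a$ to $b$, and is length-minimizing because length is lower semicontinuous under uniform convergence. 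Thus a shortest path $\gamma\subseteq K$ from $a$ to $b$ exists.

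The heart of the argument is to show that this shortest path is the straight segment $[a,b]$. Let $q=\gamma(t_0)$ be an interior point of $\gamma$ and choose $\varepsilon>0$ with $B(q,\varepsilon)\cap K$ convex. By continuity I would pick parameters $t_1<t_0<t_2$ with $\gamma([t_1,t_2])\subseteq B(q,\varepsilon)$, so that $\gamma(t_1),\gamma(t_2)\in B(q,\varepsilon)\cap K$ and, by convexity, the chord $[\gamma(t_1),\gamma(t_2)]$ lies in $B(q,\varepsilon)\cap K\subseteq K$. This chord has length $|\gamma(t_1)-\gamma(t_2)|$, which is at most the length of $\gamma$ between $t_1$ and $t_2$, with equality only if that arc is already a straight segment; minimality of $\gamma$ therefore forces $\gamma$ to be straight near $q$. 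Hence $\gamma$ is locally a segment at every interior point, so its direction is continuous and locally constant on a connected parameter interval, hence constant, and $\gamma$ traces out $[a,b]$. This yields $[a,b]\subseteq K$ and completes the proof.

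I expect the genuine obstacle to be exactly this local-to-global passage, which is the whole point of Tietze's lemma. The hypothesis of local convexity enters only through the chord-replacement step, and two points require care: that an actual length minimizer exists (the compactness argument via Arzel\`a--Ascoli together with lower semicontinuity of length), and that the replacement strictly shortens $\gamma$ unless the arc was already straight, so that a minimizer can have no bend. A tempting shortcut---showing directly that the set $\{b\in K:[a,b]\subseteq K\}$ is open in $K$---does not work, because local convexity near an endpoint gives no control over the entire segment; minimizing length, rather than arguing pointwise, is what makes local convexity globalize.
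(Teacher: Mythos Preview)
The paper does not prove this lemma; it merely quotes it from Tietze's 1928 article and uses it as a black box. So there is no ``paper's own proof'' to compare against.

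Your argument is correct and is essentially the standard modern proof of Tietze's theorem via length-minimizing curves. Each step is sound: local convexity gives local path-connectedness, hence path-connectedness; a polygonal path of finite length exists by a compactness/covering argument; Arzel\`a--Ascoli together with the closedness of $K$ and lower semicontinuity of length yields a shortest curve $\gamma$; and the chord-replacement step forces $\gamma$ to be locally a straight segment, hence globally the segment $[a,b]$. Your closing remark that the ``open set'' shortcut fails is also accurate and shows you understand where the real content lies. One small point worth making explicit is that after obtaining the minimizer you should reparametrize it by arc length before running the local-straightness argument, so that ``locally a segment'' literally means $\gamma'$ is constant on a neighborhood; you implicitly use this when concluding that the direction is locally constant on a connected interval.
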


We say that $K \subseteq \R^2$ is a {\it convex polygon} if it is a convex body (it must have non-empty interior) which is the convex hull of finitely many points $q_1, \ldots, q_k$ ordered in such a way that the segments $[q_1, q_2], \ldots [q_{k-1}, q_k]$ and $[q_k, q_1]$ have disjoint relative interiors and their union is the boundary of $K$.
These segments are the {\it sides} of $K$.
The {\it vertices} of $K$ are the points $q_i$, and we do not require them to be extremal points of $K$ (three vertices can be aligned).

When no confusion arises, we will identify the sides of $K$ with the corresponding vectors $q_i - q_{i+1}$.

A pair of {\it opposite parallel sides} is a pair of sides of $K$ that are parallel and are not contained in the same line.

Let us denote by $L,R$ (standing for left and right) the $2\times 2$ matrices of $90$ degrees rotations in the counter-clockwise and clockwise directions, respectively.
Clearly $R = -L = L^{-1} = L^t$ and $\langle R x, y \rangle = \langle x, L y \rangle$ for every $x,y \in \R^2$.

From now on, until the end of the paper we will assume that $p \in (-1,0)$.

\section{Polygons generated by alternating vectors}
\label{sec_polygons}

Let $z_1, \ldots, z_m, x$ be non-zero vectors in the plane.
We say that the vectors in $Z = \{z_1, \ldots, z_m\}$ are {\it alternating} if $(-1)^{i+1} \langle L z_{i+1}, z_i \rangle > 0$ for all $i = 1, \ldots, m-1$, and that they are {\it oriented} with $x$ if $\langle z_i, Rx \rangle > 0$ for all $i = 1, \ldots, m$ (see Figure \ref{fig_Kgenerated}).
We say that a convex polygon $K$ is {\it generated} by the set of vectors $Z$ in the direction of $x$ if $Z$ is alternating and oriented with $x$, and the vertices of $K$ are $p_0 = 0$ and $p_i = \sum_{j=1}^i z_j$ for $i = 1, \ldots, m$ (they are not ordered as the $q_i$ before).
We simply say that $K$ is generated by $Z$ if it is generated by $Z$ in some direction $x$.
Notice that two consecutive $z_i$ cannot be parallel.

The purpose of this section is to compute explicitly $\|x\|_\RpK$ in terms of $Z$.
The formula obtained will hold in a cone determined by $Z$, and we will show that $\|\cdot\|_\RpK$ is convex there.

\begin{figure}
	\includegraphics[width=.8\textwidth]{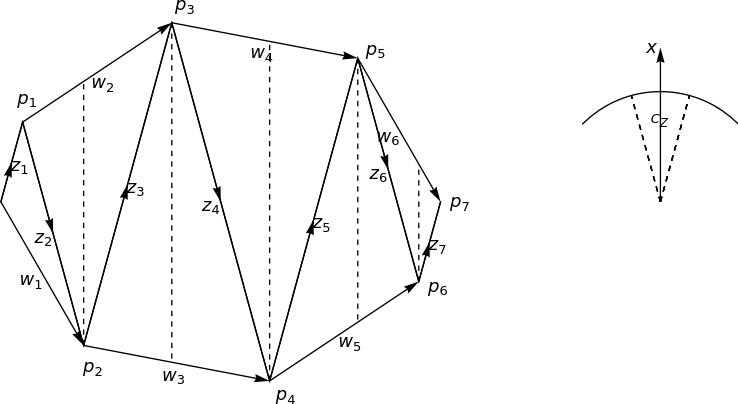}
	\caption{The convex polygon $K$ generated by the vectors $z_1, \ldots, z_m$.}
	\label{fig_Kgenerated}
\end{figure}
The sides of the polygon are $z_1, z_m$ and $w_i$ where
\begin{equation}
	\label{eq_def_wi}
	w_i = z_i + z_{i+1}\text{ for } i=1, \ldots, m-1.
\end{equation}
With this notation the sides $w_i$ with odd $i$ are in one arc of $\partial K$ connecting $0$ with $p_m$, while $z_1$ and the sides $w_i$ with even $i$ are in the opposite arc. The side $z_m$ can belong to either of the two arcs, depending on the parity of $m$.
Notice that $p_i$ are ordered in such a way that they alternate between these two arcs (see Figure \ref{fig_Kgenerated}).

For $i=1, \ldots, m-1$ we define the vectors 
\begin{align}
	\label{eq_def_ni}
	n_i 
	&= (-1)^{i+1} \langle L w_i, z_i \rangle^{-1} L w_i \\
	&= (-1)^{i+1} \langle L z_{i+1}, z_i \rangle^{-1} L w_i \\
	&= (-1)^{i} \langle L w_i, z_{i+1} \rangle^{-1} L w_i \\
	&= (-1)^{i} \langle L z_i, z_{i+1} \rangle^{-1} L w_i,
\end{align}
where the equalities hold thanks to the facts that $w_i = z_i + z_{i+1}$ and $\langle L w_i, w_i \rangle =0$.
The vector $n_i$ is orthogonal to $w_i$ and satisfies $\langle n_i, x \rangle > 0$ and $\langle n_i, z_i \rangle (-1)^{i+1} = \langle n_i, z_{i+1} \rangle (-1)^i = 1$, as it can be seen from the definition and the fact that $Z$ is alternating.

Let 
\begin{equation}
	\label{eq_defC}
	C_Z = \{y \in \R^2 \setminus \{0\}: \langle y, Lz_i \rangle \geq 0, \text{ for } i=1, \ldots, m\},
\end{equation}
and
\begin{equation}
	\label{eq_defCP}
	C_Z' = \{y \in \R^2 \setminus\{0\}: \langle y, n_i \rangle > 0, \text{ for } i=1, \ldots, m-1\},
\end{equation}
then $C_Z, C_Z'$  are two cones (closed and open, respectively) outside the origin, clearly containing the vector $x$.
The contention $C_Z  \subset C_Z'$ is easy to prove. If $y \in C_Z$ then $\langle y, L w_i \rangle = \langle y, L z_i \rangle + \langle y, L z_{i+1} \rangle \geq 0$, with equality if and only if both terms are $0$, and this cannot happen because $z_i, z_{i+1}$ are not parallel and $y \neq 0$.
Clearly $Z$ is oriented with every direction in the interior of $C_Z$.

Let $x \in C_Z$ be a unit vector.
The line passing through any vertex $p_i$ parallel to $x$, $1 \leq i \leq m-1$, touches the side $w_i$ on the opposite arc.
The distance between these two points is given by $X_i = \langle n_i, x \rangle^{-1}$ for every $i = 1, \ldots, m-1$.
This is because $n_i$ is perpendicular to $w_i$, and its norm is the inverse of the distance from $p_i$ to the line containing the side $w_i$.
Thus the X ray in the direction of $v$ between the points $p_i$ and $p_{i+1}$ is an affine function interpolating the values
$X_i$ and $X_{i+1}$. This is still true for $i=0$ and $i = m-1$ if we set $X_0 = X_m = 0$.

Recall that $K$ has no pair of opposite parallel sides and $w_i, w_{i+1}$ belong to two different arcs, so $\{n_{i-1}, n_i\}$ is linearly independent for every $i = 2, \ldots, m-1$ (still, $n_i$ and $n_{i+2}$ are allowed to be parallel).
Define $a_i, \tilde a_i,  b_i, c_i \in \R$ for $i = 2, \ldots, m-1$, by the relations
\begin{align}
	L z_i = a_i n_{i-1} + \tilde a_i n_i \label{eq_def_abc_i} \\
	L z_{i+1} = b_i n_{i-1} + c_i n_i \label{eq_def_abc_ii} .
\end{align}
\begin{proposition}
	\label{prop_coefficients}
	For $i = 2, \ldots, m-1$ the following relations hold:
	\begin{enumerate}[label=\alph*), ref=\alph*]
		\item \label{prop_coefficients_aa} $\tilde a_i = -a_i$
		\item \label{prop_coefficients_ab} $b_i = - a_i$
		\item \label{prop_coefficients_ca} $c_i = (-1)^{i+1} \langle L z_{i+1}, z_i \rangle + a_i$
	\end{enumerate}
\end{proposition}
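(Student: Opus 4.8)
The plan is to read off all three identities directly from the defining relations \eqref{eq_def_abc_i}--\eqref{eq_def_abc_ii}, using only the normalizations of the $n_j$ recorded right after \eqref{eq_def_ni} together with the parallelism built into \eqref{eq_def_ni} itself. Concretely, taking $j=i$ in $\langle n_j, z_j\rangle(-1)^{j+1}=1$ gives $\langle n_i, z_i\rangle=(-1)^{i+1}$, and taking $j=i-1$ in $\langle n_j, z_{j+1}\rangle(-1)^{j}=1$ gives $\langle n_{i-1}, z_i\rangle=(-1)^{i+1}$ as well. The point is that the ``unknown'' quantities $\langle n_{i-1}, z_{i+1}\rangle$ and $\langle n_i, z_{i-1}\rangle$ never appear, so everything reduces to pairing \eqref{eq_def_abc_i}--\eqref{eq_def_abc_ii} against $z_i$.

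For \ref{prop_coefficients_aa}, I would pair \eqref{eq_def_abc_i} with $z_i$: the left-hand side is $\langle Lz_i, z_i\rangle=0$, while the right-hand side equals $(-1)^{i+1}(a_i+\tilde a_i)$ by the two inner products above, whence $\tilde a_i=-a_i$. For \ref{prop_coefficients_ca}, pair \eqref{eq_def_abc_ii} with $z_i$ instead: now the left-hand side is $\langle Lz_{i+1}, z_i\rangle$ and the right-hand side is $(-1)^{i+1}(b_i+c_i)$, so $b_i+c_i=(-1)^{i+1}\langle Lz_{i+1}, z_i\rangle$; once \ref{prop_coefficients_ab} is known, substituting $b_i=-a_i$ gives the stated formula for $c_i$.

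To prove \ref{prop_coefficients_ab}, add \eqref{eq_def_abc_i} and \eqref{eq_def_abc_ii}: since $w_i=z_i+z_{i+1}$, this yields $Lw_i=(a_i+b_i)n_{i-1}+(\tilde a_i+c_i)n_i$. On the other hand \eqref{eq_def_ni} expresses $Lw_i$ as a scalar multiple of $n_i$ alone, so in the basis $\{n_{i-1},n_i\}$ (independent, as noted just before the statement) the coefficient of $n_{i-1}$ must vanish, i.e. $a_i+b_i=0$.

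I do not expect any real obstacle; this proposition is a bookkeeping step that packages the linear relations among $z_i, z_{i+1}, n_{i-1}, n_i$ for use in the explicit computation of $\|x\|_{\RpK}$ later in the section. The only point needing care is keeping the parity signs $(-1)^{i\pm1}$ consistent and checking that the two different normalizations of $n_{i-1}$ and $n_i$ genuinely produce the same value $(-1)^{i+1}$ when paired with $z_i$ — a sign slip there would corrupt \ref{prop_coefficients_ca}.
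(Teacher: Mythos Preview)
Your argument is correct and uses exactly the same ingredients as the paper: the normalizations $\langle n_{i-1},z_i\rangle=\langle n_i,z_i\rangle=(-1)^{i+1}$ for part~\ref{prop_coefficients_aa}, and the fact that $Lw_i$ is a multiple of $n_i$ alone for parts~\ref{prop_coefficients_ab} and~\ref{prop_coefficients_ca}. The only cosmetic difference is that the paper, after proving \ref{prop_coefficients_aa}, directly substitutes $Lz_{i+1}=Lw_i-Lz_i=(-1)^{i+1}\langle Lz_{i+1},z_i\rangle\,n_i-a_i(n_{i-1}-n_i)$ to read off \ref{prop_coefficients_ab} and \ref{prop_coefficients_ca} simultaneously, whereas you extract the coefficients via inner products; the content is identical.
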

\begin{proof}
	The first assertion, which is the fact that $L z_i$ is parallel to $n_{i-1} - n_i \neq 0$, comes from the definition of $n_i$ and
\[\langle z_i, n_{i-1} - n_i \rangle = (-1)^i - (-1)^{i+1} = 0.\]

	For the second and third assertions use \eqref{eq_def_wi} and \eqref{eq_def_ni} to write
\begin{align}
L z_{i+1}
	&= L w_i - L z_i \\
	&= (-1)^{i+1} \langle L w_i, z_i \rangle n_i - a_i n_{i-1} + a_i n_i \\
	&= ( (-1)^{i+1} \langle L z_{i+1}, z_i \rangle + a_i ) n_i - a_i n_{i-1}
\end{align}
which shows that $c_i = (-1)^{i+1} \langle L w_i, z_i \rangle + a_i$ and $b_i = - a_i$.
%
%
%
\end{proof}
Define 
\begin{align}
	\label{eq_def_alpha}
	\alpha_1 &= -a_2 + \langle L z_2, z_1 \rangle, \\
	\alpha_i &= b_{i+1} + c_i, \text{ for } 2 \leq i \leq m-2 \\ 
	\alpha_{m-1} &= c_{m-1}
\end{align}
\begin{proposition}
	\label{prop_structure_f}
	Let $K$ be a polygon with no pair of opposite parallel sides, generated by a set of alternating vectors $Z = \{z_1, \ldots, z_m\}$.
	 Then for every $x \in C_Z$ and $p \in (-1,0)$,
	\begin{equation}
		\label{eq_norm_psum}
		\|x\|_\RpK = \left( \frac{p+1}{p+2} \vol K \sum_{i=1}^{m-1} \alpha_i \langle n_i, x \rangle^{-p} \right)^{-1/p}
	\end{equation}
	Moreover, the function 
	\begin{equation}
		\label{eq_def_fZ}
		f_Z(x) = \frac{p+1}{p+2} \vol K \sum_{i=1}^{m-1} \alpha_i \langle n_i, x \rangle^{-p}
	\end{equation}
	is well defined and $C^\infty$ in $C_Z'$.
\end{proposition}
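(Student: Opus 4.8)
The plan is to evaluate in closed form the integral $\int_{\langle x\rangle^\perp}X_x K(y)^{1+p}\,dy$ of Proposition~\ref{prop_GZXray}. Fix a unit vector $x\in C_Z$ and use the linear coordinate $t=\langle\,\cdot\,,Rx\rangle$ on the line $\langle x\rangle^\perp=\langle Rx\rangle$, so that $dy$ becomes $dt$. Since $Z$ is oriented with $x$, the vertices project to the non-decreasing values $t_i=\langle p_i,Rx\rangle$ with $t_{i+1}-t_i=\langle z_{i+1},Rx\rangle=\langle Lz_{i+1},x\rangle$, and by the description of the X~rays established before the statement, $X_x K$ is, as a function of $t$, the piecewise affine function interpolating $X_i:=\langle n_i,x\rangle^{-1}$ at $t_i$ on each $[t_i,t_{i+1}]$, with $X_0=X_m=0$. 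Applying on each subinterval the elementary identity $\int_0^{L}\bigl(a+\tfrac{b-a}{L}t\bigr)^{1+p}\,dt=\tfrac{L(b^{2+p}-a^{2+p})}{(2+p)(b-a)}$ (legitimate since $1+p>0$) and collecting, for each $j$, the coefficient of $X_j^{2+p}$ contributed by the two subintervals meeting at $t_j$ — the boundary terms $X_0^{2+p}$, $X_m^{2+p}$ vanishing — one obtains
\[
	(p+2)\int_{\langle x\rangle^\perp}X_x K(y)^{1+p}\,dy=\sum_{j=1}^{m-1}(R_{j-1}-R_j)\,X_j^{2+p},\qquad R_i:=\frac{t_{i+1}-t_i}{X_{i+1}-X_i}.
\]
This rearrangement requires $X_i\neq X_{i+1}$ for all $i$, which holds for $x$ outside a finite union of lines (recall $\{n_i,n_{i+1}\}$ is linearly independent, so $n_i-n_{i+1}\neq0$); and as both sides of \eqref{eq_norm_psum}, raised to the power $-1/p$, are continuous on $C_Z$ — the left side because $t_i(x)$, $X_i(x)$ vary continuously and the subinterval integrals depend continuously on these data (even where an interval degenerates or two consecutive values coincide), the right side because $C_Z\subset C_Z'$ — it suffices to prove \eqref{eq_norm_psum} for such generic $x$.

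The core is then the pointwise identity $(R_{j-1}-R_j)X_j^{2+p}=\alpha_j\langle n_j,x\rangle^{-p}$ for each $j$. Combining \eqref{eq_def_abc_i} with Proposition~\ref{prop_coefficients}\ref{prop_coefficients_aa} gives $Lz_j=a_j(n_{j-1}-n_j)$, and together with $X_j-X_{j-1}=\langle n_{j-1}-n_j,x\rangle\langle n_{j-1},x\rangle^{-1}\langle n_j,x\rangle^{-1}$ this yields $R_{j-1}=a_j\langle n_{j-1},x\rangle\langle n_j,x\rangle$ for $2\le j\le m-1$ and, likewise, $R_j=a_{j+1}\langle n_j,x\rangle\langle n_{j+1},x\rangle$ for $1\le j\le m-2$; the two extreme cases give $R_0=\langle Lz_1,x\rangle\langle n_1,x\rangle$ and $R_{m-1}=-\langle Lz_m,x\rangle\langle n_{m-1},x\rangle$ directly from the definitions. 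Substituting and cancelling one factor $\langle n_j,x\rangle$, the identity becomes the vector relation $a_jn_{j-1}-a_{j+1}n_{j+1}=\alpha_j n_j$ for $2\le j\le m-2$, $Lz_1-a_2n_2=\alpha_1n_1$ for $j=1$, and $a_{m-1}n_{m-2}+Lz_m=\alpha_{m-1}n_{m-1}$ for $j=m-1$. For the interior range each of these follows by writing $Lz_{j+1}$ in the two ways furnished by \eqref{eq_def_abc_ii} with Proposition~\ref{prop_coefficients}\ref{prop_coefficients_ab}--\ref{prop_coefficients_ca} and by \eqref{eq_def_abc_i} at the index $j+1$; the boundary cases use instead \eqref{eq_def_abc_i} at a neighbouring index together with $w_i=z_i+z_{i+1}$ and the defining formula \eqref{eq_def_ni} for $n_i$. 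In all cases the definitions \eqref{eq_def_alpha} of the $\alpha_j$ are precisely what makes the relation hold. Summing over $j$ gives $(p+2)\int X_x K^{1+p}\,dy=\sum_j\alpha_j\langle n_j,x\rangle^{-p}$, which substituted into \eqref{eq_norm_Xrays} yields \eqref{eq_norm_psum}.

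The last assertion is immediate: on $C_Z'$ each linear functional $x\mapsto\langle n_i,x\rangle$ is strictly positive by the definition \eqref{eq_defCP}, so $x\mapsto\langle n_i,x\rangle^{-p}$ is well defined and $C^\infty$ there (as $s\mapsto s^{-p}$ is $C^\infty$ on $(0,\infty)$), and $f_Z$ is a finite linear combination of such functions.

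I expect the main obstacle to be the bookkeeping behind the three vector relations: reconciling the two expressions for $Lz_{j+1}$, tracking the alternating signs $(-1)^i$ concealed in the definition of $n_i$ and in Proposition~\ref{prop_coefficients}\ref{prop_coefficients_ca}, and above all dealing with the boundary indices $j=1$ and $j=m-1$, where $a_i$, $b_i$, $c_i$ are not defined and one must argue directly with $z_1$, $z_m$, $w_1$, $w_{m-1}$ (the degenerate cases $m=2,3$, where some index ranges are empty, also need a separate remark). A lesser and routine point is the continuity argument that passes from generic $x$ to all of $C_Z$, i.e.\ checking that the subintervals with $X_i=X_{i+1}$ do no harm.
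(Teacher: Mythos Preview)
Your proof is correct and follows essentially the paper's approach: split the X-ray integral at the projected vertices, integrate the piecewise-affine function on each subinterval, and reorganize via Proposition~\ref{prop_coefficients} to reveal the coefficients $\alpha_j$. The only difference is in the algebra of that reorganization: the paper first rewrites each divided difference $\tfrac{X_{i+1}^{2+p}-X_i^{2+p}}{X_{i+1}-X_i}$ as $X_i^{1+p}+X_{i+1}^{1+p}+\tfrac{X_{i+1}^{p}-X_i^{p}}{X_i^{-1}-X_{i+1}^{-1}}$ and then cancels the remaining denominators using $Lz_i=a_i(n_{i-1}-n_i)$, reaching \eqref{eq_norm_psum_a} without ever forming your ratios $R_i$; your route through the vector identities $a_jn_{j-1}-a_{j+1}n_{j+1}=\alpha_jn_j$ (and their boundary analogues) is a little more transparent but requires the harmless genericity hypothesis $X_i\neq X_{i+1}$, which you correctly dispose of by continuity.
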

\begin{proof}
	The fact that $f_Z$ is well defined and $C^\infty$ in $C_Z'$ is clear from the definition of $C_Z'$.

	Since the expression on the right-hand side of \eqref{eq_norm_psum} is homogeneous of degree $1$, we may assume that $x$ is unitary.
	By continuity of both sides of \eqref{eq_norm_psum} we may assume $x$ is an interior point of $C_Z$, implying that $Z$ is oriented with $x$.
	The X rays are parametrized by $t \in \R \mapsto t R x + \langle x \rangle$.
	For notational convenience we write $X_x K(t)$ instead of $X_x K(t R x)$.
	Formula \eqref{eq_norm_Xrays} can be split as
	\begin{equation}
		\label{eq_norm_Xrays_pieces}
		\|x\|_\RpK = \left( (p+1) \vol{K} \sum_{i=0}^m \int_{\langle p_i, Rx \rangle}^{\langle p_{i+1}, Rx \rangle} X_x K(t)^{1+p} d t \right)^{-1/p}.
	\end{equation}
	Let $X_i, X_{i+1}$ denote the lengths of the X rays parallel to $x$, going through the points $p_i, p_{i+1}$ respectively.
	The function $X_xK(t)$ is piecewise linear, then the part between $X_i$ and $X_{i+1}$ in the integral \eqref{eq_norm_Xrays} gives
	\begin{align}
		\int_{\langle p_i, Rx \rangle}^{\langle p_{i+1}, Rx \rangle} & X_x K(t)^{1+p} d t \\
		&= \int_{\langle p_i, Rx \rangle}^{\langle p_{i+1}, Rx \rangle} \left(X_i + \frac{t - \langle p_i, Rx \rangle}{\langle p_{i+1}, Rx \rangle - \langle p_i, Rx \rangle} (X_{i+1} - X_i)\right)^{1+p} d t \\
		&= (\langle p_{i+1}, Rx \rangle - \langle p_i, Rx \rangle) \int_0^1 (X_i + s (X_{i+1} - X_i))^{1+p} d t \\
		&= \frac 1{p+2}\langle z_{i+1}, Rx \rangle \frac {X_{i+1}^{2+p} - X_i^{2+p}}{X_{i+1} - X_i} \\
		&= \frac 1{p+2}\langle z_{i+1}, Rx \rangle \left(X_i^{1+p} + X_{i+1}^{1+p} + \frac{X_{i+1}^p - X_i^p}{X_i^{-1} - X_{i+1}^{-1}} \right)
	\end{align}
	Now denote $x_i = \langle n_i, x \rangle$ and recall that $x_i > 0$ for all $x \in C_Z$, then
	\begin{align}
		(p+2) \int_{\langle p_i, Rx \rangle}^{\langle p_{i+1}, Rx \rangle} X_x K(t)^{1+p} d t 
		&= \langle z_{i+1}, Rx \rangle \left( x_i^{-1-p} +x_{i+1}^{-1-p} + \frac{x_{i+1}^{-p} - x_i^{-p}}{x_i - x_{i+1}} \right).
	\end{align}
	Similarly, we get for the first and last intervals,
	\[ (p+2) \int_0^{\langle p_1, Rx \rangle} X_x K(t)^{1+p} d t = \langle L z_1, x \rangle x_1^{-1-p}\]
	and
	\[ (p+2) \int_{\langle p_{m-1}, Rx \rangle}^{\langle p_m, Rx \rangle} X_x K(t)^{1+p} d t = \langle L z_{m}, x \rangle x_{m-1}^{-1-p}.\]

	Putting all the terms together, reordering the sum, using \eqref{eq_def_wi}, \eqref{eq_def_ni}, writing $\langle L z_i, x \rangle = a_i x_{i-1} + \tilde a_i x_i$ and using Proposition \ref{prop_coefficients} item \eqref{prop_coefficients_aa},
	\begin{align}
		\frac {p+2}{(p+1) \vol{K}} & \|x\|_\RpK^{1+p}
		= \int_0^{\langle p_m, Rx \rangle} X_x K(t)^{1+p} d t \\
		&= \frac{\langle L z_1, x \rangle}{ x_1^{1+p} } + \sum_{i=2}^{m-1} \langle L z_i, x \rangle \left( x_{i-1}^{-1-p} +x_i^{-1-p} + \frac{x_i^{-p} - x_{i-1}^{-p}}{x_{i-1} - x_i} \right) + \frac{\langle L z_{m}, x \rangle}{x_{m-1}^{1+p}}  \\
		&= \frac{\langle L w_1, x \rangle}{x_1^{1+p}} + \sum_{i=2}^{m-1} \left( \langle L z_i, x \rangle \frac{x_i^{-p} - x_{i-1}^{-p}}{x_{i-1} - x_i}  + \frac {\langle L w_i, x \rangle}{x_i^{1+p}} \right) \\
		&= \langle L z_2, z_1 \rangle x_1^{-p} + \sum_{i=2}^{m-1} \left( a_i (x_i^{-p} - x_{i-1}^{-p} ) + \langle L z_{i+1}, z_i \rangle (-1)^{i+1} x_i^{-p} \right) \\
		&= \sum_{i=1}^{m-1} x_i^{-p} (- a_{i+1} + a_i + \langle L z_{i+1}, z_i \rangle (-1)^{i+1} ) \\
	\end{align}
	where we take $a_1 = a_m = 0$.

	By Proposition \ref{prop_coefficients} items \eqref{prop_coefficients_ab} and \eqref{prop_coefficients_ca}, we obtain
	\begin{align}
		\frac {p+2}{(p+1) \vol{K}} f_Z(x)
		&= \sum_{i=1}^{m-1} x_i^{-p} (- a_{i+1} + a_i + \langle L z_{i+1}, z_i \rangle (-1)^{i+1} ) \label{eq_norm_psum_a}\\
		&= \alpha_1 x_1^{-p} +  \sum_{i=2}^{m-2} x_i^{-p} (b_{i+1} + c_i) + \alpha_{m-1} x_{m-1}^{-p}, \\
	\end{align}
	and the proposition follows.
\end{proof}

	For later reference we write how $f_Z$ is computed for every $x \in C_Z'$,
	\begin{align}
		\label{eq_decomposition_X}
		\frac {p+2}{(p+1) \vol{K}} f_Z(x) 
		&= \langle L z_1, x \rangle \int_0^1 (s \langle x, n_1 \rangle)^{1+p} dt \\
		&+ \sum_{i=1}^{m-1} \langle z_i, Rx \rangle \int_0^1 (\langle x, n_i \rangle + s (\langle x, n_{i+1} \rangle - \langle x, n_i \rangle))^{1+p} d t  \\
		&+ \langle L z_m, x \rangle \int_0^1 (s \langle x, n_{m-1} \rangle)^{1+p} dt.
	\end{align}

\begin{proposition}
	\label{prop_signs}
	Let $K$ be a polygon with no pair of opposite parallel sides, generated by a set of alternating vectors $Z = \{z_1, \ldots, z_m\}$.
	Then there is $i_0 \in \{1, \ldots, m-1\}$ such that $\alpha_{i_0} > 0$ while $\alpha_i \leq 0$ for every $i\neq i_0$.
\end{proposition}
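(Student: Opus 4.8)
The plan is to rewrite each coefficient $\alpha_i$ in terms of the slopes of the piecewise linear X-ray length function of $K$ in an auxiliary direction $x$, and then to read off the signs of the $\alpha_i$ from the concavity of that function, which is precisely where the convexity of $K$ enters.

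Since the $\alpha_i$ depend only on $Z$, I may fix an interior point $x$ of $C_Z$, so that $Z$ is oriented with $x$. Write $\delta_i = \langle z_i, Rx\rangle = \langle Lz_i, x\rangle > 0$ and $t_i = \langle p_i, Rx\rangle$ for $i = 1,\dots,m$, so that $t_0 = 0$ and $t_i - t_{i-1} = \delta_i$; abbreviate $X_xK(t)$ for $X_xK(tRx)$. As recalled before Proposition~\ref{prop_structure_f}, on $[t_0,t_m]$ the function $X_xK$ is affine on each $[t_{i-1},t_i]$ with $X_xK(t_i) = X_i := \langle n_i,x\rangle^{-1}$, where by convention $X_0 = X_m = 0$. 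Putting $\beta_i := (-1)^{i+1}\langle Lz_{i+1},z_i\rangle > 0$ (the alternating hypothesis), the identity $n_i = \beta_i^{-1}Lw_i$ gives $\langle n_i,x\rangle = \beta_i^{-1}(\delta_i+\delta_{i+1})$, hence $X_i = \beta_i/(\delta_i+\delta_{i+1}) > 0$ for $1 \le i \le m-1$.

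Now set $\sigma_i = (X_i - X_{i-1})/\delta_i$ for $i = 1,\dots,m$. The chord length of a convex body along a pencil of parallel lines is a concave function of the translation parameter, so $\sigma_1 \ge \sigma_2 \ge \dots \ge \sigma_m$; since $X_0 = X_m = 0 < X_1, X_{m-1}$ we have $\sigma_1 > 0 > \sigma_m$. Every $\sigma_i$ is moreover nonzero: for $2 \le i \le m-1$ the relation $Lz_i = a_i(n_{i-1}-n_i)$ of Proposition~\ref{prop_coefficients}\,\eqref{prop_coefficients_aa} yields $\delta_i = a_i(X_{i-1}^{-1}-X_i^{-1})$, and $\delta_i \ne 0$ forces $X_{i-1} \ne X_i$. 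Consequently there is a unique $k \in \{1,\dots,m-1\}$ with $\sigma_i > 0$ for $i \le k$ and $\sigma_i < 0$ for $i \ge k+1$.

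The heart of the proof is the identity
\[
	\alpha_i = X_i^2\bigl(\sigma_i^{-1} - \sigma_{i+1}^{-1}\bigr), \qquad i = 1,\dots,m-1 .
\]
To establish it, I start from $\alpha_i = a_i - a_{i+1} + \beta_i$ with $a_1 = a_m = 0$, which is what \eqref{eq_norm_psum_a} combined with Proposition~\ref{prop_coefficients} gives, substitute $a_i = \delta_i/(X_{i-1}^{-1}-X_i^{-1}) = \delta_i X_{i-1}X_i/(X_i-X_{i-1})$ (valid for every $i$ under the conventions $X_0 = X_m = 0$) and $\beta_i = X_i(\delta_i+\delta_{i+1})$, and group the $\delta_i$- and $\delta_{i+1}$-contributions; the first collapses to $X_i^2\sigma_i^{-1}$ and the second to $-X_i^2\sigma_{i+1}^{-1}$. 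Granting the identity, the proposition follows at once: $X_i^2 > 0$, and whenever $\sigma_i$ and $\sigma_{i+1}$ have the same nonzero sign, reciprocation reverses the order on that half-line, so $\sigma_i^{-1} - \sigma_{i+1}^{-1} \le 0$ and $\alpha_i \le 0$ --- this covers all $i \ne k$ --- while for $i = k$ we have $\sigma_k^{-1} > 0 > \sigma_{k+1}^{-1}$, so $\alpha_k > 0$; thus $i_0 = k$. The real content is finding this reformulation: once $\alpha_i$ is written through the slopes of a concave function that vanishes at both ends of its support, the statement reduces to the trivial fact that such a slope sequence changes sign exactly once, and checking the displayed identity is only a routine computation.
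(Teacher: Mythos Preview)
Your proof is correct, and it follows a genuinely different route from the paper's.

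The paper works coordinate-free with the sides $w_i$: it introduces linear projections $P_{u,v}$ and shows that, up to a positive factor, $\alpha_i$ has the sign of $-\langle w_i, Lw_{i-1}\rangle\langle w_i, Lw_{i+1}\rangle$; convexity of $K$ together with the no-parallel-opposite-sides hypothesis is then used to argue that this product is negative for exactly one index. The strict inequality $\alpha_{i_0}>0$ is obtained separately from $\sum_i\alpha_i = 2\vol{K}>0$.

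Your argument instead fixes a direction $x\in C_Z$ and encodes the convexity of $K$ through the concavity of the piecewise-linear chord-length function $t\mapsto X_xK(t)$. The key step is the neat identity $\alpha_i = X_i^{2}\bigl(\sigma_i^{-1}-\sigma_{i+1}^{-1}\bigr)$, after which the sign pattern is immediate: the slopes $\sigma_i$ decrease monotonically, are all nonzero, and change sign exactly once, and reciprocation reverses order on each half-line. Your approach is more elementary and more self-contained---it avoids the projection machinery and gives the strict positivity of $\alpha_{i_0}$ for free---while the paper's approach is direction-independent and yields the geometric description of the $\alpha_i$ via the intersection points $r_i$ in the Remark following the proof.
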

\begin{proof}
	Here we will define $w_0 = z_1$ and $w_m = z_m$.
	With this convention, the sides of $K$ are exactly $w_0, \ldots, w_m$.
	The convexity of $K$ implies the inequality
	$ \langle w_{i+1}, (-1)^{i} L w_{i-1} \rangle \geq 0$ for all $i = 0, \ldots, m-1$.

	By the convexity of $K$ and the fact that $K$ has no pair of opposite parallel sides, there exists exactly one index $i_0 \in \{1, \ldots, m-1\}$ for which $\langle w_i, L w_{i-1} \rangle$ and $\langle w_i, L w_{i+1} \rangle$ have different signs where these numbers are non-zero for every $i = 1, \ldots, m-1$.
	We claim that for every $i \in \{1, \ldots, m-1\}$, the sign of $\alpha_i$ is that of $-\langle w_i, L w_{i-1} \rangle \langle w_i, L w_{i+1} \rangle$, then the result follows.

	For two linearly independent vectors $u,v \in \R^2$ denote $P_{u,v}$ the linear projection with image $\langle v \rangle$ and kernel $\langle u \rangle$.
	Then clearly $P_{u,v}(x) = v \frac{\langle x, u^\perp \rangle}{\langle v, u^\perp \rangle}$ and $P_{u,v}(x) + P_{v,u}(x) = x$.

	Let us prove the claim for $i=1$. By \eqref{eq_def_abc_i} and \eqref{eq_def_wi},
	\begin{align}
		z_2 &= a_2 R n_1 - a_2 R n_2 \\
		z_2 &= -w_0 + w_1\\
	\end{align}
	which implies that
	\begin{align}
		P_{w_2, w_1}(z_2) - P_{w_0, w_1}(z_2)
		&= (a_2 \langle L w_1, z_1 \rangle^{-1} - 1)w_1 \\
		&= -\alpha_1 \langle L z_2, , z_1 \rangle^{-1} w_1.
	\end{align}
	Since $P_{w_2, w_1}(w_1) - P_{w_0, w_1}(w_1) = w_1 - w_1 = 0$ and $P_{w_0, w_1}(z_1)=0$, we have
	\[
		-P_{w_2, w_1}(z_1) = -\alpha_1 \langle L z_2, , z_1 \rangle^{-1} w_1,
	\]
	and using the formula for the projection we get
	\[
		\frac {\langle z_1, n_2 \rangle}{\langle w_1, n_2 \rangle} = \alpha_1 \langle L z_2, , z_1 \rangle^{-1}\\
	\]
	but $\langle z_1, n_2 \rangle = \langle w_0, n_2 \rangle \geq 0$, $\langle w_1, L w_0 \rangle = -\langle z_1, L z_2 \rangle < 0$, and the sign of $\langle w_1, n_2 \rangle$ is the same as that of $\langle w_1, L w_2 \rangle$.
	So either $\alpha_1=0$ or the sign of $\alpha_1$ is the same as that of $-\langle w_1, L w_2 \rangle \langle w_1, L w_0 \rangle$.

	A similar computation works for $i=m-1$.

		Now for $i = 2, \ldots, m-1$ we write $z_{i+1}$ in two different ways using \eqref{eq_def_abc_i} with $z_{i+1}$ instead of $z_i$, and \eqref{eq_def_abc_ii}. By \eqref{eq_def_ni} and Proposition \ref{prop_coefficients} items \eqref{prop_coefficients_aa} and \eqref{prop_coefficients_ab} one has
		\begin{align}
			z_{i+1}
			&= b_{i+1} (-1)^{i+2} \langle L w_{i+1}, z_{i+1} \rangle^{-1} w_{i+1} - b_{i+1} (-1)^{i+1} \langle L w_i, z_i \rangle^{-1} w_i \\
			&= b_i (-1)^{i-1} \langle L w_{i-1}, z_i \rangle^{-1} w_{i-1} + c_i (-1)^{i+1} \langle L w_i, z_i \rangle^{-1} w_i.
		\end{align}
		This means that
		\begin{align}
			 (-1)^{i+1} \alpha_i \langle L z_{i+1}, z_i \rangle^{-1} w_i
			 &= P_{w_{i-1}, w_i}(z_{i+1}) - P_{w_{i+1},w_i}(z_{i+1})\\
			 &= - P_{w_{i-1}, w_i}(z_i) + P_{w_{i+1},w_i}(z_i) \label{eq_coefficient_geometry}\\
		\end{align}
	where we used again that $P_{w_{i+1},w_i}(w_i) - P_{w_{i-1}, w_i}(w_i) = w_i - w_i = 0$.

	By the formula for $P_{u,v}$, 
	\begin{align}
		(-1)^{i+1} \alpha_i \langle L z_{i+1}, z_i \rangle^{-1}
		&= \frac{\langle z_i, n_{i+1} \rangle}{\langle w_i, n_{i+1} \rangle} - \frac{\langle z_i, n_{i-1} \rangle}{\langle w_i, n_{i-1} \rangle} \\
	&= \frac{\langle z_i, n_{i+1} \rangle\langle w_i, n_{i-1} \rangle-\langle w_i, n_{i+1} \rangle\langle z_i, n_{i-1} \rangle	}{\langle w_i, L w_{i-1} \rangle \langle w_i, L w_{i+1} \rangle}\\
	&= \frac{
	\det\left( \left(\begin{array}{c} n_{i+1} \\ \hline  n_{i-1} \end{array} \right) \cdot \left(\begin{array}{c|c} z_i & w_i \end{array} \right) \right)
	}{\langle w_i, L w_{i-1} \rangle \langle w_i, L w_{i+1} \rangle}\\
\end{align}
		which is non-positive because
		\[ (-1)^{i+1} \det \left(\begin{array}{c} n_{i+1} \\ \hline  n_{i-1} \end{array} \right) =  \langle w_{i+1}, (-1)^{i} n_{i-1} \rangle (-1)^{i+2} \langle L z_{i+2}, z_{i+1} \rangle \geq 0 \]
	\[ (-1)^{i+1} \det \left(\begin{array}{c|c} z_i & w_i \end{array} \right) = (-1)^{i+1} \langle L z_i, z_{i+1} \rangle < 0.\]

		Finally, we must show the strict inequality $\alpha_{i_0} > 0$.
		The quantity $(-1)^{i+1} \langle L z_{i+1}, z_i \rangle$ is twice the area of the triangle with vertices $p_i, p_{i-1}, p_{i+1}$, and the non-overlaping union of these triangles is $K$.
		By formula \eqref{eq_norm_psum_a}, 
		\begin{align}
			\sum_{i=1}^{m-1} \alpha_i
			&= \sum_{i=1}^{m-1} (-1)^{i+1} \langle L z_{i+1}, z_i \rangle \\
			&= 2 \vol{K} >0.
		\end{align}
		This proves the claim.

\end{proof}

	\begin{figure}
		\includegraphics[width=.9\textwidth]{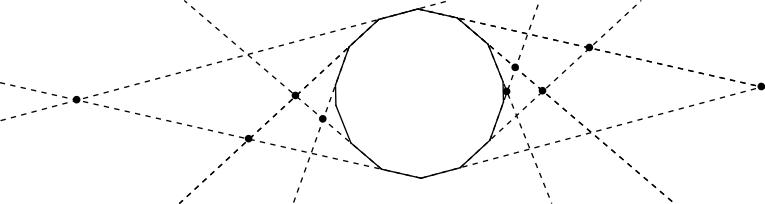}
		\caption{The coefficient $\alpha_i$ is the area of a parallelogram with sides $[r_i, r_{i+1}]$ and $z_i$.}
		\label{fig_intersectionpoints}
	\end{figure}

\begin{remark}
	Proposition \ref{prop_signs} is the place where convexity is really used.
	Equation \eqref{eq_coefficient_geometry} offers a geometric insight of the meaning of the coefficients $\alpha_i$.
	If we denote by $r_i$ the intersection of the two lines containing the sides $w_{i-1}$ and $w_i$, then $P_{w_{i+1},w_i}(z_i) - P_{w_{i-1}, w_i}(z_i)$ equals $r_{i+1} - r_i$.
	The fact that $r_i$ ``moves to the left'' (in the direction of $Lx$) when $i$ runs from $1$ to $i_0-1$, is due to the convexity of $K$.
	This point ``comes back'' from the right, exactly when taking the intersection with $w_{i_0}$, and keeps moving to the left for $i$ between $i_0+1$ and $m-1$ (see Figure \ref{fig_intersectionpoints}).
\end{remark}

\begin{proposition}
	\label{prop_convexincone}
	Let $K$ be a polygon with no pair of opposite parallel sides, generated by a set of alternating vectors $Z = \{z_1, \ldots, z_m\}$.
	Then for every $p \in (-1,0)$, the function $f_Z^{-1/p}$ is convex in an open cone containing $C_Z$.
\end{proposition}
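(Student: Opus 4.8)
The plan is to convert the convexity of $f_Z^{-1/p}$ into a one-variable differential inequality using homogeneity, and then to prove that inequality by isolating the unique positive coefficient produced by Proposition~\ref{prop_signs} and exploiting two determinantal identities among the vectors $n_i$.

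First I would fix the cone. By Proposition~\ref{prop_structure_f}, $f_Z$ is $C^\infty$ on the open cone $C_Z'\supseteq C_Z$, and on $C_Z$ it equals $\|\cdot\|_{\RpK}^{-p}>0$; since consecutive $z_i$ are not parallel, $C_Z$ lies in a wedge of angular width $<\pi$, so $C_Z\cap S^1$ is a closed arc of width $<\pi$, and by continuity there is an open \emph{convex} cone $U$ with $C_Z\subseteq U\subseteq C_Z'$ on which $f_Z>0$. On $U$ the function $g:=f_Z^{-1/p}$ is $C^\infty$; being the composition of the $(-p)$-homogeneous $f_Z$ with $t\mapsto t^{-1/p}$, it is positively homogeneous of degree $1$. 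It is enough to prove $g$ convex on $U$.

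Next I would use the standard fact that a $C^2$, positively $1$-homogeneous function on an open convex cone of $\R^2$ is convex iff $\phi+\phi''\ge0$, where $\phi(\theta)=g(\cos\theta,\sin\theta)$: Euler's identity gives $\nabla^2 g(x)\,x=0$, so $\nabla^2 g(x)$ is positive semidefinite iff $\langle Lx,\nabla^2 g(x)\,Lx\rangle\ge0$, and this quantity equals $\phi(\theta)+\phi''(\theta)$ at the unit vector $x=(\cos\theta,\sin\theta)$. Writing $\ell_i(\theta)=\langle n_i,(\cos\theta,\sin\theta)\rangle$, so that $\ell_i''=-\ell_i$, and differentiating $\phi=\bigl(\tfrac{p+1}{p+2}\vol K\bigr)^{-1/p}\bigl(\sum_i\alpha_i\ell_i^{-p}\bigr)^{-1/p}$ twice (the substitution $\ell_i''=-\ell_i$ forces a clean cancellation), one finds
\[
\phi+\phi''=(\text{a positive function on }U)\cdot\Bigl[\Bigl(\textstyle\sum_i\alpha_i\ell_i^{-p-1}\ell_i'\Bigr)^{2}-\Bigl(\sum_i\alpha_i\ell_i^{-p}\Bigr)\Bigl(\sum_i\alpha_i\ell_i^{-p-2}(\ell_i')^{2}\Bigr)\Bigr],
\]
so the proposition reduces to the pointwise inequality
\begin{equation}
\Bigl(\sum_i\alpha_i\ell_i^{-p-1}\ell_i'\Bigr)^{2}\ \ge\ \Bigl(\sum_i\alpha_i\ell_i^{-p}\Bigr)\Bigl(\sum_i\alpha_i\ell_i^{-p-2}(\ell_i')^{2}\Bigr)\qquad\text{on }U.\tag{$\star$}
\end{equation}
If all $\alpha_i$ were $\ge0$ this would be Cauchy--Schwarz with the \emph{wrong} inequality sign; the sign pattern of Proposition~\ref{prop_signs} is exactly what reverses it.

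Finally I would prove $(\star)$. Put $u_i=\ell_i^{-p/2}$, $v_i=\ell_i^{-p/2-1}\ell_i'$, and let $i_0$ be the index from Proposition~\ref{prop_signs}, so $\alpha_{i_0}>0$ and $\alpha_j=-|\alpha_j|\le0$ for $j\neq i_0$. Expanding $\mathrm{LHS}-\mathrm{RHS}$ of $(\star)$, pulling out the $i_0$-terms, and applying the Lagrange identity to the remaining double sum yields
\[
\mathrm{LHS}-\mathrm{RHS}=\alpha_{i_0}\!\!\sum_{j\neq i_0}\!\!|\alpha_j|\,(u_{i_0}v_j-v_{i_0}u_j)^2-\tfrac12\!\!\sum_{j,k\neq i_0}\!\!|\alpha_j||\alpha_k|\,(u_jv_k-u_kv_j)^2 .
\]
Here the determinantal identities enter: $\ell_{i_0}\ell_j'-\ell_{i_0}'\ell_j=\det(n_{i_0},n_j)=:d_j$ and $\ell_j\ell_k'-\ell_j'\ell_k=\det(n_j,n_k)$ are \emph{constant} in $\theta$, and the Cramer-type relation $\det(n_j,n_k)\,\ell_{i_0}=d_k\ell_j-d_j\ell_k$ holds identically. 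Substituting $u_{i_0}v_j-v_{i_0}u_j=\ell_{i_0}^{-p/2-1}\ell_j^{-p/2-1}d_j$ and $u_jv_k-u_kv_j=\ell_j^{-p/2-1}\ell_k^{-p/2-1}\det(n_j,n_k)$, then using the Cramer relation to expand $\det(n_j,n_k)^2$, the ``quartic'' cross terms cancel and
\[
\mathrm{LHS}-\mathrm{RHS}=\ell_{i_0}^{-p-2}\bigl[\alpha_{i_0}T_2-\ell_{i_0}^{\,p}\,(T_0T_2-T_1^2)\bigr],\qquad T_k:=\sum_{j\neq i_0}|\alpha_j|\,\ell_j^{-p-k}\,d_j^{\,k}.
\]
Since $0\le T_0T_2-T_1^2\le T_0T_2$ by Cauchy--Schwarz, and $T_2\ge0$, $\ell_{i_0}>0$, it is enough to have $\alpha_{i_0}\ell_{i_0}^{-p}\ge T_0=\sum_{j\neq i_0}|\alpha_j|\ell_j^{-p}$, i.e. $\sum_i\alpha_i\ell_i^{-p}\ge0$; and this holds on $U$ because $\sum_i\alpha_i\ell_i^{-p}=f_Z\big/\bigl(\tfrac{p+1}{p+2}\vol K\bigr)>0$ there. (If $T_2=0$ then $\mathrm{LHS}-\mathrm{RHS}=\ell_{i_0}^{-2}T_1^2\ge0$.) This proves $(\star)$, hence $\phi+\phi''\ge0$, hence the proposition. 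The step I expect to be the real obstacle is exactly this last reduction: one has to recognize that, after the two determinantal identities are brought in, the ``bad'' contributions of the negative coefficients $\alpha_j$ recombine, via the Lagrange identity, precisely with the Cauchy--Schwarz defect $T_0T_2-T_1^2$, so that the whole obstruction collapses to the positivity of $f_Z$, which is already in hand. The double differentiation yielding $(\star)$, and the algebra with the $d_j$, are otherwise routine.
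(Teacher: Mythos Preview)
Your proof is correct, but it is genuinely different from the paper's.

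The paper never touches the Hessian of $f_Z^{-1/p}$.  Instead it uses Proposition~\ref{prop_signs} only qualitatively: it writes $f_Z(x)=\langle v_0,x\rangle^{-p}-\varphi(x)$ with $\varphi(x)=\sum_{j}\langle v_j,x\rangle^{-p}$ a nonnegative combination, observes that $\varphi^{-1/p}$ is \emph{concave} on $C_Z'$ (reverse Minkowski for the $\ell^{-p}$ ``norm'' with $-p\in(0,1)$), and then argues on the level set $\{f_Z^{-1/p}\le 1\}$ via Tietze's local--to--global lemma: near a boundary point one has $\langle v_0,x\rangle^{-p}>1$, so $f_Z(x)\le 1$ is equivalent to $\varphi(x)^{-1/p}\ge(\langle v_0,x\rangle^{-p}-1)^{-1/p}$, the superlevel set of a concave minus a convex function.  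No differentiation, no determinants.

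Your route computes $\phi+\phi''$ and reduces to the quadratic--form inequality $(\star)$, then exploits two specifically planar ingredients---the Lagrange identity and the Cramer relation $\det(n_j,n_k)\ell_{i_0}=d_k\ell_j-d_j\ell_k$---to collapse the obstruction to $f_Z>0$.  I checked the algebra (the expansion of LHS$-$RHS into $\ell_{i_0}^{-p-2}[\alpha_{i_0}T_2-\ell_{i_0}^{\,p}(T_0T_2-T_1^2)]$ is correct, as is the final estimate via $0\le T_0T_2-T_1^2\le T_0T_2$ and $\alpha_{i_0}\ell_{i_0}^{-p}\ge T_0\Leftrightarrow f_Z\ge 0$).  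What your approach buys is an explicit, pointwise lower bound on the radial curvature $\phi+\phi''$ in terms of the data $(\alpha_i,n_i)$; in particular it shows the curvature is controlled by the \emph{margin} $f_Z>0$, which could be useful for stability or quantitative statements.  What the paper's approach buys is brevity and a formulation that does not rely on the $2$--dimensional Cramer trick, so it is closer in spirit to something that might survive in higher dimension once an analog of Proposition~\ref{prop_signs} is available.
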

\begin{proof}
	By Propositions \ref{prop_structure_f} and \ref{prop_signs}, $f_Z$ can be expressed as
	\[f_Z(x) = \langle v_0, x \rangle^{-p} - \varphi(x) \]
	where either $\varphi \equiv 0$ or
	\[\varphi(x) = \sum_{j=1}^{k} \langle x, v_j \rangle^{-p},\]
	$v_j \in \R^2 \setminus \{0\}$
	and $\langle v_j, \cdot \rangle$ are positive functions in $C_Z'$ for $j \geq 0$.

	In the first case $f_Z^{-1/p}$ is linear in $C_Z$ and there is nothing to prove, so let us assume $\varphi$ is not identically zero.

	Let us show that $\varphi(x)^{-1/p}$ is concave.
	For every $x,y \in \R^2, \lambda \in [0,1]$,
	\begin{align}
		\varphi(\lambda x + (1-\lambda) y)
		&= \left( \sum_{i=1}^{k} (\lambda \langle x, v_i \rangle + (1-\lambda) \langle y, v_i \rangle)^{-p} \right)^{-1/p} \\
		&\geq \lambda \left( \sum_{i=1}^{k} \langle x, v_i \rangle^{-p}\right)^{-1/p} + (1-\lambda) \left( \sum_{i=1}^{k} \langle y, v_i \rangle^{-p} \right)^{-1/p}\\
		&= \lambda \varphi(x) + (1-\lambda) \varphi(y),
	\end{align}
	where we used that $(x_1, \ldots, x_k) \mapsto \left(\sum_i x_i^{-p} \right)^{-1/p}$ is a concave function in the positive orthant, and $\lambda \langle x, v_i \rangle + (1-\lambda) \langle y, v_i \rangle > 0$ for all $i$.

	The functions $f_Z$ and $\varphi$ are strictly positive in $C_Z$ so we may take a closed cone outside the origin $D \subseteq C_Z'$ containing $C_Z$ in its interior, such that $f_Z$ and $\varphi$ are strictly positive in $D$.

	To show that $f_Z^{-1/p}$ is convex, we observe that it is positive and homogeneous of degree $1$ in $C_Z'$, so that it suffices to show that the level set $U = \{x \in D: f_Z(x)^{-1/p} \leq 1\}$ is convex.

	The set $U$ is closed and star-shaped with respect to the origin, so it is connected.
	By Lemma \ref{lem_locallyconvex}, it is enough to show that every point in $U$ is inside a small ball $B$ such that $B \cap U$ is convex.
	Since $D$ is a closed cone outside the origin, this is evident for every point in $\{x \in D: f_Z(x)^{-1/p} < 1\}$. Then it suffices to check this condition for every point in the surface $S = \{x \in D: f_Z(x)^{-1/p} = 1\}$.

	Take any $x_0 \in S$, so that $f_Z(x_0) = \langle x_0, v_0 \rangle^{-p} - \varphi(x_0) = 1$ and $\langle x_0, v_0 \rangle^{-p} - 1 = \varphi(x_0) > 0$. 
	Let $\varepsilon > 0$ be small enough so that for every $x \in B(x_0, \varepsilon) \cap D$, we have $\langle x, v_0 \rangle^{-p} > 1$.

	For every $x \in B(x_0, \varepsilon) \cap D$, the following are equivalent:
	\begin{align}
		f_Z(x) = \langle v, x \rangle^{-p} - \varphi(x) &\leq 1 \\
		\varphi(x) &\geq \langle v, x \rangle^{-p} - 1 \\
		\varphi(x)^{-1/p} &\geq ( \langle v, x \rangle^{-p} - 1)^{-1/p} \\
		\varphi(x)^{-1/p} - ( \langle v, x \rangle^{-p} - 1)^{-1/p} &\geq 0.
	\end{align}
	Observe that $t\mapsto (t^{-p}-1)^{-1/p}$ is a convex function for $t>1$, so the right-hand side is a concave function of $x$.
	Then we deduce that $U \cap B(x_0, \varepsilon)$ is a convex set, and the theorem follows.
\end{proof}

\section{Proof of Theorem \ref{thm_main}}
\label{sec_cones}
In this section we will extend the convexity of $\|\cdot\|_\RpK$ from one cone to the whole plane.

	Let $K$ be a convex polygon with no pair of opposite parallel sides and let $p_i$ be the vertices.
	Consider the set of vectors $p_i - p_j, 1 \leq i,j \leq m, i\neq j$.
	The lines generated by these vectors divide the plane into some finite collection of non-overlapping cones $C_1(K), \ldots, C_k(K)$.

\begin{proposition}
	\label{prop_convexinsomecone}
	In each $C_i(K)$, the function $\|\cdot\|_\RpK$ is convex for every $p \in (-1,0)$.
\end{proposition}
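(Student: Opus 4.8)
The plan is to deduce Proposition \ref{prop_convexinsomecone} from Proposition \ref{prop_convexincone}, by showing that inside each cone $C_\ell(K)$ of the subdivision the polygon $K$ coincides, \emph{as a set}, with a polygon that is generated by a set of alternating vectors oriented with every direction of $C_\ell(K)$. The first thing I would record is that the relevant combinatorics are constant on $C_\ell(K)$: for a direction $y$, the order in which the X rays parallel to $y$ meet the vertices $p_1,\dots,p_m$ of $K$ is the order induced by the numbers $\langle p_i, Ry\rangle$, and this order changes only when $y$ crosses one of the lines $\langle p_i-p_j\rangle$; hence it is constant on the interior of $C_\ell(K)$, and one also checks there that the X ray through each vertex of $K$ hits a fixed edge of the opposite boundary arc.

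The key step is the following. Fix $x$ in the interior of $C_\ell(K)$ and list the vertices of $K$ in $x$–sweep order $p_{(0)},\dots,p_{(M)}$. The two boundary arcs of $K$ share the extreme vertices $p_{(0)},p_{(M)}$ and partition the remaining ones; the resulting sequence of arc–labels need not alternate, but if two consecutive vertices $p_{(k)},p_{(k+1)}$ lie on the same arc then no vertex lies strictly between them, so the opposite arc crosses those sweep positions along a single edge, and I would insert on that edge a point whose sweep position is strictly between those of $p_{(k)}$ and $p_{(k+1)}$. After all such insertions the zig–zag alternates strictly between the two arcs. Let $K_\ell$ be the polygon with this enlarged vertex set. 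Since every new vertex lies on an edge of $K$, one has $K_\ell=K$ as sets, so the X rays of $K_\ell$ and $K$ and their volumes coincide; and since three boundary points of a convex polygon are collinear only when they lie on a common edge, every turn of the zig–zag is strict, so the system $Z_\ell$ of consecutive difference vectors of $K_\ell$ is alternating and oriented with $x$, and the segments $w_i=z_i+z_{i+1}$ are exactly the edges of $K_\ell$ (the edges of $K$, possibly subdivided into collinear pieces). Finally $K_\ell$ has no pair of opposite parallel sides: a parallel pair of its sides consists either of two collinear sub–edges of one edge of $K$, which is allowed, or of two parallel edges of $K$, which is excluded by hypothesis. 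Thus $K_\ell$ satisfies the hypotheses of Propositions \ref{prop_structure_f} and \ref{prop_convexincone}.

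With this in hand the argument finishes as follows. By Proposition \ref{prop_structure_f}, $\|y\|_\RpK=\|y\|_{R_pK_\ell}=f_{Z_\ell}(y)^{-1/p}$ for $y\in C_{Z_\ell}$; since $x$ lies in the interior of $C_{Z_\ell}$, this identity holds on an open neighbourhood of $x$, on which $f_{Z_\ell}^{-1/p}$ is convex by Proposition \ref{prop_convexincone}. Letting $x$ range over the interior of $C_\ell(K)$, we obtain that $\|\cdot\|_\RpK$ is convex near every point of the open convex set $\mathrm{int}\,C_\ell(K)$, hence convex on it; and since $\|\cdot\|_\RpK$ is continuous on $\R^2\setminus\{0\}$ — by formula \eqref{eq_norm_Xrays} and a dominated convergence argument identical to the one in the proof of Lemma \ref{lem_approximation} — and $C_\ell(K)$ is convex, convexity extends to the closed cone $C_\ell(K)$ by approximating interior points.

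I expect the main obstacle to be the construction of $K_\ell$: one must check carefully that the inserted vertices really do produce an alternating system oriented with $x$ (the turning signs along the zig–zag, and the behaviour of the first and last vectors near $p_{(0)}$ and $p_{(M)}$), and that no opposite parallel sides are created, which is exactly the point where the hypothesis on $K$ enters. The passage from local to global convexity, and the continuity of $\|\cdot\|_\RpK$, are then routine.
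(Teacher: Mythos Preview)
Your proposal is correct and follows essentially the same route as the paper: for each interior direction $x$ of $C_\ell(K)$, add vertices on edges so that $K$ is generated by an alternating system oriented with $x$, apply Proposition \ref{prop_convexincone} locally, and then pass from local to global convexity on the cone. The paper phrases the local-to-global step via the positive semi-definiteness of the Hessian (using that $\|\cdot\|_\RpK$ is $C^2$ on $\mathrm{int}\,C_\ell(K)$ by Proposition \ref{prop_structure_f}) rather than by your direct argument, and it explicitly translates $K$ so that $p_{(0)}=0$ (which the definition of ``generated by $Z$'' requires and which you should add), but otherwise the arguments coincide.
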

\begin{proof}
	Let $x$ be in the interior of $C_i(K)$. Since $x$ is not parallel to any $p_i - p_j$, the numbers $\langle R x, p_i \rangle$ are all different.
	Now we may translate $K$ and if necessary add vertices to the sides of $K$ to find a set of alternating vectors $Z$ generating $K$, which are oriented with $x$.
	Notice that adding vertices to the sides of $K$ do not make pairs of opposite parallel sides appear, and translating $K$ does not change $\RpK$.

	By Proposition \ref{prop_convexincone}, $\|\cdot \|_\RpK$ is a $C^2$ convex function in an open cone that contains $x$ in the interior (this new cone might be smaller than $C_i(K)$ since we added vertices to $K$).
This reasoning can be applied to an arbitrary point in $C_i(K)$. Since $\|\cdot\|_\RpK$ is $C^2$ in the interior of $C_i(K)$, we deduce that for every $x$ in the interior of $C_i(K)$ the Hessian of $\|\cdot \|_\RpK$ is positive semi-definite.
	Then $\|\cdot \|_\RpK$ is convex in the whole $C_i(K)$.
\end{proof}

Now we must prove that $\|\cdot\|_\RpK$ is convex in the intersection of every pair of cones.
A direction $x\neq 0$ is in the intersection of two cones if it is parallel to some $p_i - p_j$.
We must distinguish two cases: If $x$ is parallel to a side we shall see that $\|\cdot\|_\RpK$ has a convex vertex.
Otherwise we will see that $\|\cdot\|_\RpK$ the tangents of $R_pK$ on both sides of the cone, coincide (the body is $C^1$ but not $C^2$ at this point).

\begin{proposition}
	\label{prop_cone_union_vertex}
	Assume $K$ has no pair of opposite parallel sides and let $p \in (-1,0)$.
	If $x_0$ is parallel to a side of $K$ then there is a neighbourhood of $x_0$ where the restriction of $\|\cdot \|_\RpK$ is convex.
\end{proposition}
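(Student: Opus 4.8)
The plan is to analyze the two cones meeting at the direction $x_0$ parallel to a side $w$ of $K$ and show that the one-sided formulas for $\|\cdot\|_\RpK$ match up to first order and are each convex, so that the pasted function has a ``convex corner'' (i.e. is the maximum of two convex functions near $x_0$, or at worst satisfies the local convexity criterion of Lemma~\ref{lem_locallyconvex} applied to the sublevel set). Concretely, fix $x_0$ parallel to the side $w$. On each side of $x_0$ we may translate $K$ and insert vertices so that $K$ is generated by an alternating family $Z$ oriented with the nearby directions, and Proposition~\ref{prop_structure_f} gives $\|x\|_\RpK = f_Z(x)^{-1/p}$ with $f_Z$ a finite sum $\sum_i \alpha_i \langle n_i, x\rangle^{-p}$ (up to the constant $\frac{p+1}{p+2}\vol K$). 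The key point is that as $x \to x_0$ from the left, one of the X-ray endpoints degenerates: the line through the relevant vertex parallel to $x_0$ becomes the support line carrying the whole side $w$, so the corresponding $\langle n_i, x_0\rangle$ tends to a finite positive value and the ``boundary'' term $\langle L z_1, x\rangle x_1^{-1-p}$ (or the symmetric $z_m$-term) in \eqref{eq_decomposition_X} degenerates because $\langle L z_1, x_0\rangle = 0$. So at $x_0$ the formula \eqref{eq_decomposition_X} is genuinely continuous across the cone boundary.

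The steps I would carry out: (1) Write down explicitly the two generating sets $Z^{-}$ and $Z^{+}$ of $K$ adapted to directions just counterclockwise and just clockwise of $x_0$; they differ only in which endpoint of the edge $w$ is treated as a vertex swept by the parallel family, i.e. the labeling of the two arcs changes across $x_0$. (2) Check, using formula \eqref{eq_decomposition_X} rewritten as an integral of $X_xK(t)^{1+p}$ over the slab, that the resulting expression for $f(x):=\left((p+1)\vol K\right)\int_{\langle x\rangle^\perp} X_xK(y)^{1+p}dy$ is the \emph{same} analytic function of $x$ on a full neighbourhood of $x_0$ once we account for the degenerating terms — this is essentially the observation that the X-ray length function $X_xK(\cdot)$ varies continuously with $x$ near $x_0$ and the edge $w$ parallel to $x_0$ contributes a single linear piece of slope zero. (3) Hence $\|\cdot\|_\RpK = f(x)^{-1/p}$ with a single $C^\infty$ expression of the form $\langle v_0, x\rangle^{-p} - \varphi(x)$ valid near $x_0$, and by the argument of Proposition~\ref{prop_convexincone} (applying Proposition~\ref{prop_convexincone} to $Z^-$ and to $Z^+$, both of which produce the same germ at $x_0$) we get that its Hessian is positive semidefinite on a punctured neighbourhood, hence on the whole neighbourhood by continuity.

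The main obstacle I expect is Step~(2): one must be careful that inserting auxiliary vertices on $w$ and switching which arc a given $n_i$ belongs to does not change the germ of $f_Z$ at $x_0$ — in particular the coefficients $\alpha_i$ and the vectors $n_i$ produced by $Z^-$ versus $Z^+$ are different bookkeeping for the same geometric quantity (the areas of the parallelograms in Figure~\ref{fig_intersectionpoints}), and one needs that the two sums agree as functions, not just at the point $x_0$. An alternative, cleaner route avoiding this bookkeeping is to bypass the explicit $n_i$-formula entirely near $x_0$: use Proposition~\ref{prop_GZXray} directly, note that for $x$ in a neighbourhood of $x_0$ the edge $w$ (parallel to $x_0$) contributes a flat piece to $X_xK$, split $K$ by the two support lines parallel to $x_0$ into a ``top'' and ``bottom'' cap plus the slab through $w$, observe each piece is itself a polygon generated by alternating vectors oriented with \emph{every} direction near $x_0$ (no arc-switching occurs for these sub-polygons), apply Propositions~\ref{prop_structure_f} and \ref{prop_convexincone} to each, and add; the slab term is an explicit integral that is manifestly smooth and whose $(-1/p)$-th power is handled as in Proposition~\ref{prop_convexincone}. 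Then the local convexity follows from the fact that a sum of the relevant ``$\langle v_0,\cdot\rangle^{-p}-\varphi$'' pieces still has the structure exploited in Proposition~\ref{prop_convexincone}, and Lemma~\ref{lem_locallyconvex} finishes the argument on the sublevel set.
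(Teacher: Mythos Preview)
Your approach has a genuine gap: the claim in Step~(2) that the two one-sided formulas $f_{Z^-}$ and $f_{Z^+}$ agree as analytic functions on a full neighbourhood of $x_0$ is false. The paper states explicitly (just before this proposition) that when $x_0$ is parallel to a side, $\|\cdot\|_{\RpK}$ has a genuine convex \emph{vertex} at $x_0$; it is only in the complementary case (Proposition~\ref{prop_cone_union_tangent}) that the function is $C^1$. Your heuristic that the boundary term $\langle L z_1, x\rangle\, x_1^{-1-p}$ ``degenerates'' because $\langle L z_1, x_0\rangle=0$ only shows continuity at the single point $x_0$; that term has nonzero gradient there, so the two formulas do not match to first order. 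Your alternative decomposition into ``two caps plus a slab'' also breaks down geometrically: since $K$ has no pair of opposite parallel sides, the line through the side $w$ is a support line, so there is no slab and only one cap, which is $K$ itself --- you are back to the original problem.

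What the paper actually does is different and uses an \emph{inequality}, not an identity. One chooses a single alternating family $Z$ with $z_1$ the side parallel to $x_0$, oriented with the half-plane $D_-$. Proposition~\ref{prop_convexincone} already gives that $f_Z^{-1/p}$ is smooth and convex on an open cone $D'$ containing $x_0$, and $f_Z^{-1/p}=\|\cdot\|_{\RpK}$ on $D'\cap D_-$. For $x$ on the other side $D'\cap D_+$, one interprets the terms in \eqref{eq_decomposition_X} as signed lengths $Y_xK, Z_xK$ along lines extending the sides $z_1$ and $w_1$, and uses the subadditivity $(X+Y)^{1+p}\le X^{1+p}+Y^{1+p}$ (valid because $1+p\in(0,1)$) to get $f_Z(x)\le \|x\|_{\RpK}^{-p}$, i.e.\ $f_Z(x)^{-1/p}\le \|x\|_{\RpK}$ on $D_+$. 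Since $f_Z^{-1/p}$ is smooth convex on all of $D'$ and $\|\cdot\|_{\RpK}$ is smooth convex on $D_+$ with $\|\cdot\|_{\RpK}\ge f_Z^{-1/p}$ there and equality on the common boundary, the one-sided directional derivatives of $\|\cdot\|_{\RpK}$ across that boundary can only jump upward, which is exactly the convexity condition along any segment in a small ball around $x_0$. The subadditivity step is the key idea you are missing.
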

\begin{proof}
	The line containing the side parallel to $x_0$ determines two open half-planes $D_+, D_-$. Let $D_+$ be the one containing the interior of $K$.
	First choose alternating vectors $Z = \{z_1, \ldots, z_m\}$ generating $K$, in such a way that $z_1$ is the (unique) side of $K$ parallel to $x_0$, and that $Z$ is oriented with all $x \in D_- \cap B$ where $B$ is a small ball centered in $x_0$ (Figure \ref{fig_side}).
	By Proposition \ref{prop_convexincone} the function $f_Z$ is well defined, smooth and convex in an open cone $D'$ containing $x_0 \in \partial C_Z$, and equals $\|\cdot\|_\RpK$ in $D' \cap D_- \cap B$.

	\begin{figure}
	\includegraphics[width=.45\textwidth]{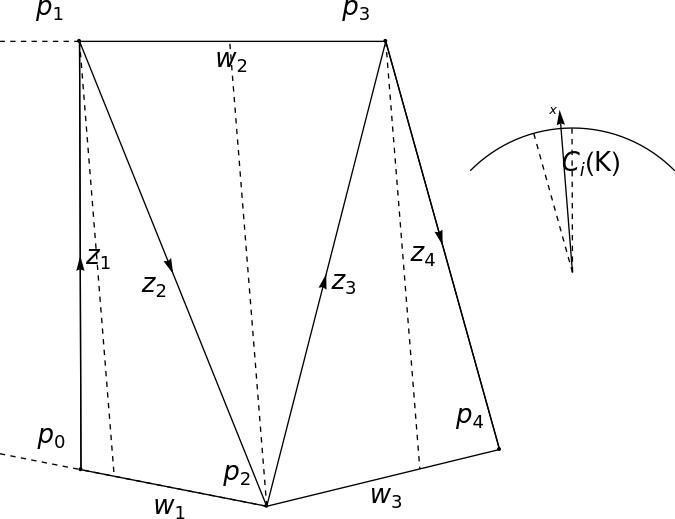}
	\includegraphics[width=.45\textwidth]{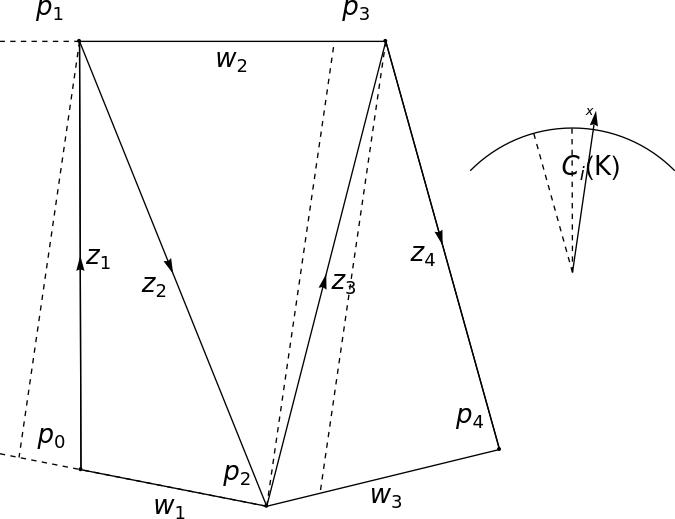}
		\caption{}
		\label{fig_side}
	\end{figure}

	Recall that equation \eqref{eq_decomposition_X} computes the integral of the linear interpolations of the X rays going through the vertices, computed as $X_i = \langle n_i, x \rangle^{-1}$.
	We will analyze the values of $f_Z(x)$ when $x$ is in $D' \cap D_+ \cap B$.
	For $x$ in this set the term $\langle n_1, x \rangle^{-1}$ no longer measures the lengths of the X rays through $K$.
	Instead, we must define two auxiliary lengths, $Y_x K(t), Z_x K(t)$.
	Consider all the lines parallel to $x$ going through points $t Rx \in \langle x \rangle^\perp$.
	For $t Rx + \langle x \rangle$ between $p_0$ and $p_1$, call $Y_x K(t)$ the length of the segment in the line $t Rx + \langle x \rangle$ from the side $z_1$ to the line containing the side $w_1$.
	Likewise, call $Z_x K(t)$ the length of the segment in the line $t Rx + \langle x \rangle$ from the line containing the side $w_1$ to the side $w_2$.
	We see that $Y_x K(t)$ is the linear interpolation between $0$ and $\langle n_1, x \rangle^{-1}$ when $t$ goes from $\langle p_1, Rx \rangle$ to $\langle p_0, Rx \rangle$. Similarly $Z_x K(t)$ is the linear interpolation between $\langle n_1, x \rangle^{-1}$ and $\langle n_2, x \rangle^{-1}$ when $t$ goes from $\langle p_0, Rx \rangle$ to $\langle p_2, Rx \rangle$.
	By shrinking $B$ if necessary, one may assume that the lines parallel to $x$ passing through all the other vertices $p_i$ intersect the side $w_i$ in the relative interior (here we use that no other vector $p_i - p_j$ is parallel to $x_0$).
	This means that equation \eqref{eq_decomposition_X} becomes (notice that the term $\langle L z_1, x \rangle$ is negative)
	\begin{align}
		\frac 1{(p+1) \vol{K}} f_Z(x) 
		&= -\int_{\langle p_1, R v \rangle}^{\langle p_0, R v \rangle} Y_x K(t)^{1+p} d t + \int_{\langle p_1, R v \rangle}^{\langle p_0, R v \rangle} Z_x K(t)^{1+p} d t \\& + \int_{\langle p_0, R v \rangle}^\infty X_x K(t)^{1+p} d t\\
		&= \int_{\langle p_1, R v \rangle}^{\langle p_0, R v \rangle} (Z_x K(t)^{1+p} - X_x K(t)^{1+p} - Y_x K(t)^{1+p}) d t \\& + \int_{\langle p_1, R v \rangle}^\infty X_x K(t)^{1+p} d t \leq \frac 1{(p+1) \vol{K}} \|\cdot \|_\RpK^{-p}
	\end{align}
	where we used that $Z_x K(t)^{1+p} = (X_x K(t)+Y_x K(t))^{1+p} \leq X_x K(t)^{1+p} + Y_x K(t)^{1+p}$.
	
	By shrinking $B$ further if necessary, one may assume that $\|x\|_\RpK$ is smooth and convex in $B \cap D_+$.
	To see the convexity in $B$, just observe that for $x,y\in B$ the function $\lambda \in [0,1] \mapsto \|\lambda x + (1-\lambda) y\|_\RpK$ is $C^2$ with non-negative second derivative in all the interval except possibly at a single point (the crossing point between $D_+$ and $D_-$, if there is a crossing) where the derivative has a positive jump discontinuity, so we see that the derivative must be non-decreasing.

\end{proof}

\begin{proposition}
	\label{prop_cone_union_tangent}
	Let $K$ be a convex polygon with no pair of opposite parallel sides and let $p_i$ be the vertices.
	Assume that the vectors $p_i - p_j$ are pairwise not parallel.

	If $x_0$ is parallel to a vector $p_i - p_j$ but not parallel to a side of $K$, then for every $p \in (-1,0)$,
	$\|\cdot\|_\RpK$ is $C^1$ at $x_0$.
	In particular $\|\cdot\|_\RpK$ is convex in a small ball centered at $x_0$.
\end{proposition}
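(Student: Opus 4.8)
The plan is to show that the left and right directional derivatives of $\|\cdot\|_\RpK$ at $x_0$ agree, by computing the gradient of $\|\cdot\|_\RpK$ on each side of the ray $\langle x_0 \rangle$ using the explicit formula from Proposition \ref{prop_structure_f}. Since $x_0 \parallel p_i - p_j$ with $p_i, p_j$ vertices, the line through $p_i$ parallel to $x_0$ passes through $p_j$; equivalently, two of the "interpolation breakpoints" of the piecewise-linear X ray function $X_{x_0}K$ collide. On one side of $\langle x_0 \rangle$, I would choose alternating vectors $Z = \{z_1,\dots,z_m\}$ generating (a translate of, with possibly added vertices) $K$ oriented with that side; on the other side, a different generating set $Z' = \{z_1',\dots,z_{m'}'\}$. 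Each gives a formula $f_Z$, resp. $f_{Z'}$, valid and $C^\infty$ on an open cone, and agreeing with $\|\cdot\|_\RpK^{-p}$ up to the constant $\frac{p+1}{p+2}\vol K$ on its side near $x_0$. Since $t \mapsto t^{-1/p}$ is smooth and positive on $(0,\infty)$ and $f_Z(x_0) = f_{Z'}(x_0) > 0$, it suffices to prove $\nabla f_Z(x_0) = \nabla f_{Z'}(x_0)$.

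The key step is to recognize that both $f_Z$ and $f_{Z'}$ are, via \eqref{eq_decomposition_X}, integrals over $\langle x \rangle^\perp$ of $(X_x K(y))^{1+p}$ times the appropriate Jacobian factor — that is, away from the degenerate direction they both literally equal $\frac{p+1}{p+2}\vol K$ times $\int_{\langle x\rangle^\perp} X_xK(y)^{1+p}\,dy$ by Proposition \ref{prop_GZXray}. So the real content is an integral-differentiation argument: I would write $g(x) = \int_{\langle x \rangle^\perp} X_x K(y)^{1+p}\,dy$ for $x$ in a neighborhood of $x_0$ on the sphere, parametrize $y = tRx$, change variables, and show directly that $g$ is differentiable at $x_0$ with matching one-sided derivatives. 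Concretely, the breakpoints of $X_xK(\cdot)$ are the values $\langle p_\ell, Rx\rangle$; as $x$ crosses $\langle x_0\rangle$, the two breakpoints at $p_i$ and $p_j$ swap order, but $X_xK$ is continuous and piecewise linear, and the integrand $X_xK(t)^{1+p}$ is continuous in $(x,t)$ jointly (here $1+p > 0$ is used so there is no singularity). Differentiating under the integral sign, the derivative from each side is an integral of $\partial_x\big(X_xK(t)^{1+p}\big)$ plus boundary terms from the moving endpoints of the projection of $K$ onto $\langle x\rangle^\perp$; the contributions of the colliding breakpoints cancel in pairs (the jump in slope of $X_xK$ at $p_i$ is exactly compensated by the jump at $p_j$ when they coincide), so both one-sided gradients equal the same absolutely convergent integral.

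Once $\|\cdot\|_\RpK$ is $C^1$ at $x_0$: by Proposition \ref{prop_convexinsomecone} it is convex on each of the two cones $C_a(K), C_b(K)$ meeting along $\langle x_0\rangle$, hence convex on each closed half-ball $B(x_0,\varepsilon)\cap C_a(K)$ and $B(x_0,\varepsilon)\cap C_b(K)$. For $x, y \in B(x_0,\varepsilon)$, the segment $[x,y]$ crosses $\langle x_0\rangle$ at most once; on each of the (at most two) subintervals $\lambda \mapsto \|\lambda x + (1-\lambda)y\|_\RpK$ is convex, and its one-sided derivatives match at the crossing point precisely because $\|\cdot\|_\RpK$ is $C^1$ there (the gradients from both sides agree, so the chain-rule derivatives agree). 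Thus $\lambda \mapsto \|\lambda x + (1-\lambda)y\|_\RpK$ has non-decreasing derivative on all of $[0,1]$, giving convexity on $B(x_0,\varepsilon)$, exactly as in the last paragraph of the proof of Proposition \ref{prop_cone_union_vertex}.

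The main obstacle I anticipate is the bookkeeping in the differentiation-under-the-integral step: making rigorous that the boundary/breakpoint terms cancel as $x$ crosses $\langle x_0\rangle$, and handling the moving endpoints of the projection of $K$. A cleaner route, which I would try first, is to avoid explicit differentiation entirely and instead argue via $f_Z$ and $f_{Z'}$: both are $C^\infty$ near $x_0$ on their respective closed cones, both are restrictions of the single continuous function $\|\cdot\|_\RpK^{-p}$ (times a constant), and a continuous function that is $C^1$ from each side of a hyperplane with the two one-sided differentials forced to agree by the chain of identities coming from \eqref{eq_decomposition_X} evaluated at $x_0$ — where the colliding terms $\langle n_i, x_0\rangle^{-1}$ and $\langle n_j', x_0\rangle^{-1}$ take equal values — is $C^1$. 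Verifying that last "chain of identities" at the single point $x_0$ is a finite linear-algebra computation rather than an analytic limit, which should be the most robust way to close the argument.
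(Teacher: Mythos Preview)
Your high-level strategy is right and matches the paper: establish that $\|\cdot\|_\RpK$ is $C^1$ at $x_0$ by showing the one-sided derivatives coming from the two adjacent cones agree, and then deduce local convexity exactly as you do in your last paragraph (which is verbatim the paper's argument). The difference is in how the $C^1$ matching is obtained.

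You propose to work with \emph{two} generating systems $Z,Z'$, one for each side, and to compare $\nabla f_Z(x_0)$ with $\nabla f_{Z'}(x_0)$ either by differentiating $\int X_xK^{1+p}$ directly or by a ``finite linear-algebra computation'' based on the colliding terms taking equal values at $x_0$. The paper takes a cleaner route: it uses a \emph{single} generating system $Z$ chosen so that $x_0$ is parallel to one of the $z_i$, and exploits the fact (built into Proposition~\ref{prop_structure_f}) that $f_Z$ is already $C^\infty$ on the open cone $C_Z'\supsetneq C_Z$, hence extends smoothly across the ray $\langle x_0\rangle$. On the ``wrong'' side $D_-$, $f_Z$ no longer equals $\|\cdot\|_\RpK^{-p}$, but the paper reinterprets formula~\eqref{eq_decomposition_X} geometrically there (the numbers $\langle n_i,x\rangle^{-1}$ become lengths of three auxiliary segments $Y_xK,\,Z_xK,\,W_xK$) and observes that the discrepancy is an integral over an interval of length $O(|x-x_0|)$ of an integrand of size $O(|x-x_0|)$, giving $|f_Z(x)-\|x\|_\RpK^{-p}|=O(|x-x_0|^2)$. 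That second-order estimate immediately yields $C^1$ matching with no gradient computation at all.

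Your alternative is not wrong, but the step you flag as the obstacle really is one: having the colliding $\langle n_i,x_0\rangle^{-1}$ values agree does not by itself force the gradients of $f_Z$ and $f_{Z'}$ to agree, and the ``finite linear-algebra computation'' you defer would in effect have to reproduce the $O(|x-x_0|^2)$ estimate above. If you try to carry it out, you will find yourself comparing the two formulas term by term and discovering that their difference is supported on the single collapsing interval between $p_i$ and $p_j$ --- at which point the paper's geometric argument is the natural way to close it.
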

\begin{proof}
	As in the proof of Proposition \ref{prop_cone_union_vertex}, the line parallel to $x_0$ containing the points $p_i, p_j$,  determines two open half-planes $D_+, D_-$.
	As before, by translating $K$ and eventually adding vertices to the sides, we may generate $K$ with alternating vectors $Z = \{z_1, \ldots, z_m\}$ in such a way that $x_0$ is parallel to one of the $z_i$, and $Z$ is oriented with all $x$ which are in $D_+ \cap B$ where $B$ is a small ball centered at $x_0$ (see Figure \ref{fig_fakefunctions}).
	By Proposition \ref{prop_convexincone}, the function $f_Z^{-1/p}$ is defined in an open cone $C'$ containing $x_0$ where it is convex, and coincides with $\|\cdot\|_\RpK$ in $C' \cap D_+$.
	Let $x \in C' \cap D_- \cap B$.
	The value of $f_Z(x)^{-1/p}$ no longer coincides with $\|x\|_\RpK$, but $f_Z^{-1/p}$ is still convex at $C' \cap D_-$.
	We shall prove that $f_Z(x)^{-1/p}$ and $\|x\|_\RpK$ coincide up to second order terms.

		\begin{figure}
	\includegraphics[width=.49\textwidth]{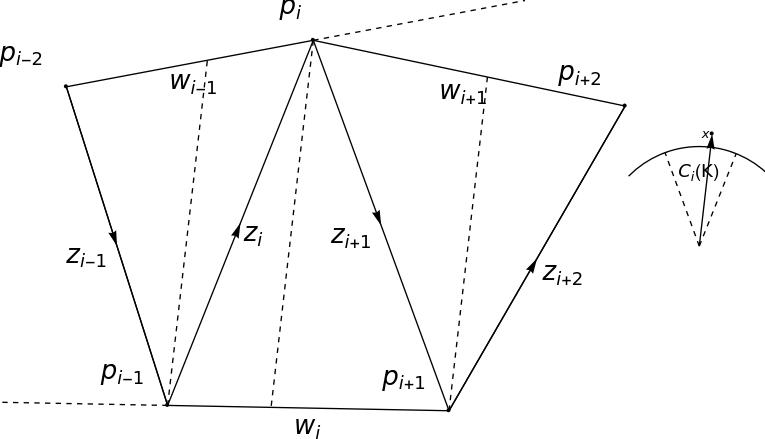}
	\includegraphics[width=.49\textwidth]{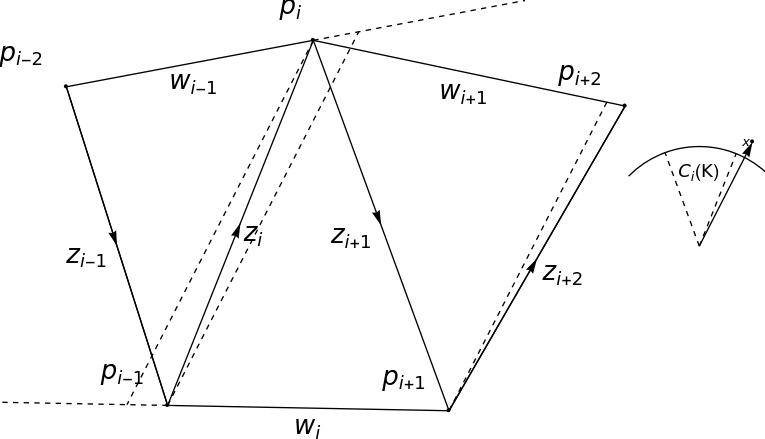}
			\caption{The terms $\langle n_i, x \rangle^{-1}$ give the lengths of the X rays when $x \in C_i(K)$ (left), and are the extended rays $Y,Z$ or $W$ when $x \in C' \setminus C_i(K)$ (right). }
			\label{fig_fakefunctions}
		\end{figure}

	Consider the X rays parallel to $x$.
	Since $x$ is not in $D_+$, the vectors in $Z$ are not oriented with $x$.
	By eventually shrinking $B$ if necessary we may assert that no vector $z_i$ is parallel to $x$ for $x \in D_- \cap B$, then all X rays going through $p_k$ for $k \neq i \geq 2$ intersect the opposite side $w_k$ in its relative interior, and formula \eqref{eq_decomposition_X} is applicable to this decomposition, except for the X rays between $p_{i-2}$ and $p_{i+1}$ (see Figure \ref{fig_fakefunctions}).
	Now consider all the lines parallel to $x$ going through some point $y \in \langle x \rangle^\perp$.
	By analyzing formula \eqref{eq_decomposition_X}, we see that the numbers $\langle n_i, x \rangle^{-1}$ no longer measure the X rays parallel to $x$, and as in the proof of Proposition \ref{prop_cone_union_vertex} we need auxiliary quantities, that are the ones appearing in formula \eqref{eq_decomposition_X}.
	For $t Rx+\langle x \rangle$ between $p_{i-2}$ and $p_{i-1}$, call $Y_x K(t)$ the length of the segment contained in $t Rx + \langle x \rangle$ from $w_{i-2}$ (or $z_{i-1}$ if $i=2$) to the line containing the segment $w_{i-1}$.
	For $t Rx+\langle x \rangle$ between $p_i$ and $p_{i-1}$ call $Z_x K(t)$ the length of the segment contained in $t Rx + \langle x \rangle$ from the line containing the side $w_{i-1}$ to the line containing the side $w_i$.
	For $t Rx+\langle x \rangle$ between $p_i$ and $p_{i+1}$ call $W_x K(t)$ the length of the segment contained in $t Rx + \langle x \rangle$  from the line containing the side $w_i$ to the side $w_{i+1}$.
	Formula \eqref{eq_decomposition_X} now becomes
	(notice that the term $\langle R x, z_i \rangle$ is negative)
	\begin{align}
		\frac {p+2}{(p+1) \vol{K}}  f_Z(x)
		&= \int_{-\infty}^{\langle Rx, p_i \rangle} X_x K(t)^{1+p} d t + \int_{\langle Rx, p_i \rangle}^{\langle Rx, p_{i-1} \rangle} Y_x K(t)^{1+p} d t \\
		&-\int_{\langle Rx, p_i \rangle}^{\langle Rx, p_{i-1} \rangle} Z_x K(t)^{1+p} d t + \int_{\langle Rx, p_i \rangle}^{\langle Rx, p_{i-1} \rangle} W_x K(t)^{1+p} d t \\& + \int_{\langle Rx, p_{i-1} \rangle}^\infty X_x K(t)^{1+p} d t \\
		&= \int_{-\infty}^\infty X_x K(t)^{1+p} d t + \int_{\langle Rx, p_i \rangle}^{\langle Rx, p_{i-1} \rangle} (Y_x K(t)^{1+p} - Z_x K(t)^{1+p} \\& +  W_x K(t)^{1+p} - X_x K(t)^{1+p} ) d t.
	\end{align}
	It is clear that $|{\langle Rx, p_i \rangle} - {\langle Rx, p_{i-1} \rangle}|, |Y_x K(t) - Z_xK(t)|$ and $|W_xK(t) - X_xK(t)|$ are of order $O(|x-x_0|)$ as $x \to x_0$.

	We obtain for $x \in C' \cap D_- \cap B$,
	\[|f_Z(x)^{-1/p} - \|x\|_\RpK| \leq O(|x-x_0|^2)\]
	while for $x \in C' \cap D_+$, $f_Z(x)^{-1/p} = \|x\|_\RpK$.

	This implies that the tangent space to the graph of $f_Z^{-1/p}$ at $x_0$, is also tangent to the graph of $\|x\|_\RpK$.
	Furthermore, $\|\cdot\|_\RpK$ is $C^1$ up to the boundary in each $B \cap D_\pm$.
	Then $\|\cdot\|_\RpK$ is also $C^1$ at $x_0$ too.

	To see the convexity in $B$, just observe that for $x,y\in B$ the function $\lambda \in [0,1] \mapsto \|\lambda x + (1-\lambda) y\|_\RpK$ is $C^2$ with non-negative second derivative in all the interval except possibly at a single point (the crossing point between $D_+$ and $D_-$, if there is a crossing) where it is $C^1$, so the derivative must be non-decreasing.
\end{proof}

Finally we are in conditions to prove the main theorem.
\begin{proof}[Proof of Theorem \ref{thm_main}]
	First assume that $K$ is a polygon with vertices $p_i$ such that all the vectors $p_i - p_j$ are pairwise not parallel.
	We will use Lemma \ref{lem_locallyconvex} with the set 
	\[\RpK = \{x \in \R^n : \|x\|_\RpK \leq 1\},\]
	which is closed and star-shaped with respect to the origin, and thus connected.
	If $\|x\|_\RpK < 1$ then $x$ is in the interior of $\RpK$ and $x$ is inside a (convex) closed ball inside $\RpK$.
	If $\|x\|_\RpK = 1$ then in particular $x \neq 0$. By Propositions \ref{prop_cone_union_vertex} and \ref{prop_cone_union_tangent}, there is a ball $B$ centered at $x$ where $\|\cdot\|_\RpK$ is convex, then $B \cap \RpK$ is convex.
	Lemma \ref{lem_locallyconvex} then implies that $\RpK$ is convex.

	If $K$ is any convex body, take a sequence of polygons $K_m$ as before, converging to $K$ in the Hausdorff metric.
	By Lemma \ref{lem_approximation}, $\|\cdot\|_{R_pK_m}$ converges pointwise to $\|\cdot\|_\RpK$ in $\R^2$.
	The convexity of $\|\cdot\|_{R_pK_m}$ for all $m$ then implies that $\|\cdot\|_\RpK$ is also convex.
	To see this, take $x,y \in \R^2$ and $\lambda \in [0,1]$, and write
	\[\|\lambda x + (1-\lambda) y\|_{R_pK_m} \leq \lambda \|x\|_{R_pK_m} + (1-\lambda) \|y\|_{R_pK_m}.\]
	Taking limits as $m\to \infty$ one obtains the same inequality for $\RpK$, and the proof is now complete.
\end{proof}

\section{Concluding remarks}
\label{sec_remarks}

The restriction on the dimension provides a way of generating polytopes where it is possible to compute $f_Z$.
Proving an analog of Proposition \ref{prop_structure_f} for higher dimensions, for example $n=3$, seems to be significantly more complicated but not impossible, although probably there are better ways to prove the convexity of $\RpK$.
For the moment this appears to be out of reach.

The special form of $f_Z$ in Proposition \ref{prop_structure_f} and the signs of the coefficients given in Proposition \ref{prop_signs} show that there is a specific structure that makes $-p$-combinations of linear functions, convex in a cone.
It would be interesting to find general necessary and sufficient conditions on $n_i, \alpha_i$ that ensure that a function of the form \eqref{eq_norm_psum} is convex.

A second interesting problem is to analyze the limit of formula \eqref{eq_norm_psum} when the polygon approaches a smooth convex body $K$, and find an analog formula for a smooth convex body based on differential invariants of $K$ (tangent plane, curvature, etc). 
As can be observed from the proof of Proposition \ref{prop_signs}, the formula for $f_Z$ does not depend on $Z$ but rather on $K$, since $\alpha_i = 0$ if $w_{i-1}$ and $w_{i+1}$ are parallel.

Lastly, we point out that there exists a similar problem regarding convex norms that seems to be related to our work. Consider the following invariant $p$-norm in the space of $n \times n$ matrices.
\[\|A\|_p = \left( \int_{S^{n-1}} |A.v|^p d \sigma(v)\right)^{1/p},\quad A \in \operatorname{M}_{n,n}(\R)\]
where $\sigma$ is the invariant probability measure of the sphere and $|\cdot|$ is the euclidean norm.
The norm $\|\cdot\|_p$ interpolates between the operator norm when $p \to \infty$ and a multiple of the Hilbert-Schmidt norm when $p=2$.
Also, if $A$ is non-singular and $p=-n$, it equals $|\det(A)|^{-1/n}$, while for $p \to -\infty$ it recovers the smallest singular value.
The function $\|\cdot\|_p$ is clearly convex for $p\geq 1$, since $A \mapsto |A.v|^p$ is convex for every fixed $v \in S^{n-1}$, and thus the level set $\{A \in \operatorname{M}_{n,n}(\R) : \int_{S^{n-1}} |A.v|^p d \sigma(v) \leq 1\}$ is convex.

Since $\|\cdot\|_p$ is unitarily invariant, it is determined by its restriction to the subspace of diagonal matrices. Identifying this subspace with $\R^n$ one has
\[\|x\|_p = \left( \int_{S^{n-1}} \left( \sum_{i=1}^n (x_i v_i)^2 \right)^{p/2} d\sigma(v) \right)^{1/p}.\]
Interestingly enough, numerical simulations suggest that $\|\cdot\|_p$ is also convex for $p \in (0,1)$.
The case $n=2$ shows similarities with formula \eqref{eq_norm_psum}, but without negative coefficients.

\bibliographystyle{abbrv}
\bibliography{../references }

\end{document}